\newcommand\les{\lesssim}
\newcommand\ges{\gtrsim}
\newcommand\R{\mathbb{R}}
\newcommand\T{\mathbb{T}}
\newcommand\Z{\mathbb{Z}}
\newcommand\N{\mathbb{N}}
\renewcommand\S{\mathbb{S}}
\newcommand{\calQ}{\mathcal Q}
\newcommand{\calT}{\mathcal T}
\renewcommand{\L}{\mathcal{L}}
\newcommand{\ls}{{\lesssim}}
\newcommand\la{\langle}
\newcommand\ra{\rangle}
\newtheorem{theo}{Theorem}
\numberwithin{theo}{section} 
\newtheorem{lema}[theo]{Lemma}
\newtheorem{prop}[theo]{Proposition}
\newtheorem{corol}[theo]{Corollary}
\newtheorem{defin}[theo]{Definition}
\newtheorem{rem}{Remark}
\newtheorem{con}{Condition}
\numberwithin{equation}{section}
\begin{document}
\title{Optimal bilinear restriction estimates for general hypersurfaces and the role of the shape operator}

\author[I. Bejenaru]{Ioan Bejenaru} \address{Department
  of Mathematics, University of California, San Diego, La Jolla, CA
  92093-0112 USA} \email{ibejenaru@math.ucsd.edu}

\begin{abstract} It is known that under some transversality and curvature assumptions on the hypersurfaces involved, the bilinear restriction estimate holds true with better exponents than what would trivially follow from the corresponding linear estimates. This subject was extensively studied for conic and parabolic surfaces with sharp results proved by Wolff \cite{Wo} and Tao \cite{Tao-BW, Tao-BP}, and with later generalizations \cite{Lee-BR,LeeVa, BuMuVa, Sto}. In this paper we provide a unified theory for general hypersurfaces and clarify the role of curvature in this problem, by making statements in terms of the shape operators of the hypersurfaces involved.

\end{abstract}

\subjclass[2010]{42B15 (Primary);  42B25 (Secondary)}
\keywords{Bilinear restriction estimates, Shape operator, Wave packets}

\maketitle

\section{Introduction}

A fundamental question in Harmonic Analysis is the restriction estimate, to which we will refer as the linear restriction estimate. Given a smooth compact hypersurface $S \subset \R^{n+1}, n \geq 1$ the linear restriction estimate $R_S(p \rightarrow q)$ holds true if 
\begin{equation} \label{LRE} 
\| \hat f \|_{L^q(S,d\sigma)} \leq C(p,S) \| f \|_{L^p(\R^{n+1})}.
\end{equation}
We recall that a hypersurface in $\R^{n+1}$ is an $n$-dimensional submanifold of $\R^{n+1}$. 
\eqref{LRE} justifies why, given $f \in L^p(\R^{n+1})$, one can meaningfully consider the restriction of $\hat f$ to $S$ as an element $L^q(S,d\sigma)$. A priori 
such a result is not to be expected, unless $p=1$ (indeed if $S$ is a subset of a hyperplane the above estimate will fail for $p > 1$). However it is well known
that if $S$ has some non-vanishing principal curvatures, then improvements are available beyond the trivial case $p=1$. 

Using duality, the linear restriction estimate $R_S(p \rightarrow q)$ is equivalent to the adjoint linear restriction estimate $R_S^{*}(q' \rightarrow p')$: 
\begin{equation} \label{ALRE} 
\| \widehat{f d \sigma} \|_{L^{p'}(\R^{n+1})} \leq C(p,S) \| f \|_{L^{q'}(S,d\sigma)}.
\end{equation}
where $p',q'$ are the dual exponents to $p,q$ used in \eqref{LRE}. Establishing  \eqref{LRE} or  \eqref{ALRE} for the full conjectured range of pairs $(p,q)$, respectively $(p',q')$ is a major open problem in Harmonic Analysis. However in the case $q=q'=2$, the problem is completely 
understood with optimal ranges for $p,p'$. We note that the linear restriction estimate and its adjoint are known to hold true for other values of $q,q'$, that is $q \ne 2$, respectively $q' \ne 2$. 

Throughout the rest of this paper we set $q=q'=2$.  In this case, the optimal range for $p'$ in \eqref{ALRE} is well-known: if $S$ has non-zero Gaussian curvature then $R_S^{*}(2 \rightarrow \frac{2(n+2)}{n})$ holds true, 
 if $S$ has one vanishing principal curvature and the others are non-zero, then $R_S^{*}(2 \rightarrow \frac{2(n+1)}{n-1})$ holds true, 
and so on (keeping in mind that it is necessary that at least one principal curvature has to be non-zero). The original formulation of the linear restriction estimate was made for surfaces with non-zero Gaussian curvature 
and the result is due to Tomas-Stein, see \cite{St}. In Partial Differential Equations the adjoint linear restriction estimate is known as 
a particular case of the more general class Strichartz estimates $L^p_t L^q_x$; the adjoint restriction estimate occurs when $p=q$, see 
\cite{Tao-book}. 

For reasons that we explain later, it is important to consider the bilinear adjoint restriction estimate 
$R_{S_1,S_2}^{*}(2 \times 2 \rightarrow p)$:
\begin{equation} \label{ABRE}
\| \widehat{f_1 d \sigma_1} \cdot \widehat{f_2 d\sigma_2} \|_{L^{p}(\R^{n+1})} \leq C(p,S_1,S_2) \| f_1 \|_{L^2(S_1,d\sigma_1)}
\| f_2 \|_{L^2(S_2,d\sigma_2)}.
\end{equation}
We abuse language throughout the rest of this paper and will refer to \eqref{ABRE} as the bilinear restriction estimate, thus skipping the adjoint part.  This is also consistent with title of our paper. 

If $S_1=S_2=S$ then $R_S^{*}(2 \times 2 \rightarrow p)$ is equivalent to $R_S^{*}(2 \rightarrow 2p)$.
However if $S_1$ and $S_2$ have some transversality properties, it is expected that \eqref{ABRE}
improves the range of $p$ over what follows directly from its linear counterpart, that is if $p_{linear}$ is the optimal exponent 
in $R_S^{*}(2 \rightarrow p)$, then $R_S^{*}(2 \times 2 \rightarrow p)$ may hold true for $p < \frac{p_{linear}}2$. 

Klainerman and Machedon conjectured that \eqref{ABRE} holds true for $p \geq p_0= \frac{n+3}{n+1}$ in the case of conic 
(model $\tau=|\xi|$) and quadratic surfaces (model $\tau=|\xi|^2$ or $|(\tau,\xi)|=1$).  A formalization of this conjecture was made in \cite{FoKl} by Foschi and Klainerman. Essentially this states that if $S_i=\{ (\tau,\varphi(\xi)); \xi \in D_i \}$ (where $\varphi(\xi)$ is one of the models) and $D_1$ and $D_2$ are "separated" then \eqref{ABRE} holds true. By constructing counterexamples, Foschi and Klainerman show that the exponent $p_0= \frac{n+3}{n+1}$ is optimal, in the sense that \eqref{ABRE} cannot hold true if $p < p_0$ for either of the models listed. 

This conjecture is expected to hold true for more general hypersurfaces (than the models listed above). One of the main 
goals of this paper is to understand what are the natural conditions that $S_1$ and $S_2$ need to satisfy such that \eqref{ABRE}
holds true for optimal $p \geq p_0= \frac{n+3}{n+1}$. But first, we review the current state of the conjecture. 

In reading the results below it is important to keep in mind that \eqref{ABRE} is trivial for $p=\infty$, and, with a little more work,
it can be shown to be true for $p=2$ using only transversality hypothesis (we recall here that $S_1$ and $S_2$ are smooth and compact). In fact, in the absence of any curvature assumptions, it can be shown that $p=2$ is optimal by simply taking $S_1,S_2$ to be compact subsets of transversal hyperplanes. The difficult part of \eqref{ABRE} is using curvature information in order to obtain results with $p <2$.  

For conic surfaces, the estimate \eqref{ABRE} was formulated by Bourgain. In \cite{Bou-CM} Bourgain proved it for $n=2$ and $p=2-\epsilon$, and in \cite{TV-CM1} Tao and Vargas proved it for $n=2$ and $p > 2-\frac{8}{121}$, as well as 
$n=3$ and $p=2-\epsilon$. The full range $p >  \frac{n+3}{n+1}$  was established by Wolff in \cite{Wo}, while the end-point $p = \frac{n+3}{n+1}$ was established by Tao in \cite{Tao-BW} (both these results hold for all dimensions $n \geq 2$). 

For parabolic surfaces, Tao and Vargas established \eqref{ABRE} with $n=2$ and $p> 2-\frac2{17}$ in \cite{TV-CM1}, while 
the full range $p >  \frac{n+3}{n+1}$ (for all $n \geq 2$) was established by Tao in \cite{Tao-BP}. The end-point case 
$p =  \frac{n+3}{n+1}$ is still an open problem.

In \cite{Lee-BR}, Lee generalizes some of the above results to the case of surfaces with curvature of different signs.
In the quadratic case the model is $\tau=\sum_{i=1}^n \epsilon_i \xi_i^2, \epsilon_i \in \{-1,1\}$, while in the conic case the model is
$\tau \cdot \xi_n=\sum_{i=1}^{n-1} \epsilon_i \xi_i^2, \epsilon_i \in \{-1,1\}$. Lee establishes \eqref{ABRE} for $p>\frac{n+3}{n+1}$
under, apparently, stronger transversality assumptions between $S_1$ and $S_2$, where $S_1$ and $S_2$
are of the same type, that is both quadratic or both conic.    

In \cite{LeeVa}, Lee and Vargas, provide bilinear estimates for more general conic type surfaces, that is surfaces that have $k$-vanishing curvatures with $k \geq 1$. We note that the condition they impose on the conic surfaces with $k=1$ (one-vanishing curvature) is similar to our condition in the form \eqref{C3}. There is more to say about this work that goes beyond the formal statements made in it. 
After the first draft of our paper was made available, 
Lee and Vargas brought to our attention that the arguments they use in \cite{LeeVa} work for more general setups, and, in particular, they can cover our setup. However, their approach would use a combinatorics type argument, while ours uses an energy type argument. 

We learned very recently of some other directions of generalizations, see Buschenhenke, M\"uller, Vargas \cite{BuMuVa} and Stovall \cite{Sto}. 

In all the above works the role of transversality between $S_1$ and $S_2$ is clear. In the context of conic surfaces, the condition
is stated in terms of angular separation of $D_1$ and $D_2$, while for paraboloids the condition is stated in terms of separation of the domains $D_1$ and $D_2$.

However the precise role of the curvature in obtaining \eqref{ABRE} with optimal $p \geq p_0=\frac{n+3}{n+1}$ is not well understood. It is somehow disguised by the fact that all previous works have dealt with precise surfaces (or small perturbations of): conic-conic or quadratic-quadratic surface interactions. What is clear is that, in some sense, 
the optimal bilinear estimate relies on a lower count of the non-vanishing principal curvatures. Indeed, from the above it follows that one obtains the same 
result for parabolic or spherical surfaces (both having non-vanishing Gaussian curvature) as well as for
conic surfaces (which do have one vanishing principal curvature). The difference between these two types of surfaces is clear at the linear level in $R_S^*(2 \rightarrow p')$ as the quadratic surfaces yield an estimate with a lower $p$: $ p'_{quadratic}=\frac{2(n+2)}{n} < \frac{2(n+1)}{n-1}= p'_{conic}$ in the language used in \eqref{ALRE}.

We have come to the main point of this paper. Our goal is to obtain a universal theory for the bilinear estimate \eqref{ABRE} with optimal exponents $p > p_0=\frac{n+3}{n+1}$. This would require a complete understanding of the role of curvature in this problem and we do so by using geometric operators such as the shape operator or the second fundamental form. 

We consider two surfaces $S_1,S_2 \subset \R^{n+1}=\{(\xi,\tau): \xi \in \R^n, \tau \in \R\}$ that are graphs of smooth maps, that is 
they are given by the equations $\tau=\varphi_i(\xi), \xi \in D_i, i=1,2$, where $\varphi_1,\varphi_2$ are smooth in their domains $D_1,D_2 \subset \R^n$. 
The domains $D_1,D_2$ are assumed to have the usual properties: bounded, open, connected.
For each $i \in \{1,2\}$, we assume that $|\partial^\alpha \varphi_i(\xi)| \les 1$ in $D_i$, for sufficiently many multi-indeces $\alpha$. 
We use the identity map $id: \R^{n+1} \rightarrow \R^{n+1}$ to immerse $S_1, S_2 \subset \R^{n+1}$ as submanifolds.
We denote by $N_i(\zeta)= \frac{(-\nabla \varphi_i(\xi), 1)}{|(-\nabla \varphi_i(\xi), 1)|}$, where $\zeta=(\xi,\varphi_i(\xi)) \in S_i$, the normal to $S_i$ at the point $\zeta \in S_i$. 

Let $\tau^h$ denote the translation in $\R^{n+1}$ by the vector $h \in \R^{n+1}$, $\tau^h(x)=x+h, \forall x \in \R^{n+1}$. 
We denote by $C_2(h)= \tau^h(-S_1) \cap S_2 \subset S_2$ and $C_1(h)=-\tau^{-h}(C_2(h)) = S_1 \cap \tau^{h}(-S_2) \subset S_1$. 
In other words, $C_2(h)$ is obtained by intersecting $S_2$ with a translate of $-S_1$, while $C_1(h)$ is obtained by intersecting $S_1$ with a translate of $-S_2$. Note that $C_2(h)$ and $-C_1(h)$ are, modulo translations, a common submanifold of both $S_2$ and $-S_1$. 

For some $\zeta_i \in C_i(h) \subset S_i$ we split the tanget space $T_{\zeta_i} S_i = T_{\zeta_i} C_i(h) \oplus (T_{\zeta_i} C_i(h))^\perp$. For $\zeta_i \in S_i$, we denote by $S_{N_i(\zeta_i)}: T_{\zeta_i} S_i \rightarrow T_{\zeta_i} S_i$ the shape operator of the immersion $id : S_i \rightarrow \R^{n+1}$ (see Section \ref{RGC} for more detailed definitions). 

For $S_i, \tilde S_i$ defined as above, we say that $S_i \Subset \tilde S_i$ provided that $\bar D_i \subset \tilde D_i$, where we recall that $D_i, \tilde D_i$ are open subsets. In other words the inclusions $D_i \subset \tilde D_i$ and $S_i \subset \tilde S_i$ are compact.

For $u,v \in \R^{n+1}$ (thought as vectors) we define $|u \wedge v|$ to be the area of the parallelogram spanned by the vectors $u,v$, while for $u,v,w \in \R^{n+1}$ we define $vol(u,v,w)$ to be the volume of the parallelepiped spanned by the vectors $u,v,w$. 

We are now ready to state transversality and curvature properties of the surfaces involved in this paper.

\begin{con} (uniform transversality) We assume that $S_1$ and $S_2$ are transversal in the following sense:
\begin{equation} \label{C1}
| N_1(\zeta_1) \wedge N_2(\zeta_2)| \ges 1
\end{equation}
for any $\zeta_1 \in S_1, \zeta_2 \in S_2$ in an uniform way. 
\end{con}

\begin{con} (dispersion/curvature) We assume that for $i=1,2$ one of the following holds true for all $ h \in \R^{n+1}$
\begin{equation} \label{C3}
vol(N_i(\zeta_1) - N_i(\zeta_2), N_1(\tilde \zeta_1), N_2(\tilde \zeta_2)) \ges | \zeta_1 - \zeta_2 |, \quad \forall \zeta_1,\zeta_2 \in C_i(h), \forall \tilde \zeta_i \in S_i. 
\end{equation}
 \begin{equation} \label{C3aa}
|S_{N_i(\zeta_i)} v \wedge n| \ges | v | |n|, \quad \forall \zeta_i \in C_i(h), v \in T_{\zeta_i} C_i(h), n \in (T_{\zeta_i} C_i(h))^\perp.
\end{equation}
The bounds are meant to be uniform with respect to $h \in \R^{n+1}$. 
\end{con}

Since we will make multiple references to these two conditions, we will abbreviate them by using \bf C1\rm, respectively \bf C2\rm. 

It is worth noticing one difference between \eqref{C3} and \eqref{C3aa}: the former is global, while the latter is local. 
We will clarify in Section \ref{RGC} their local equivalence and why, for the purpose of the argument, the local/global aspect makes no difference.

At this point we can state the main result of this paper. 
\begin{theo} \label{mainT} Assume that $S_1 \Subset \tilde S_1, S_2 \Subset \tilde S_2$ where $\tilde S_1,\tilde S_2$ satisfy \bf C1 \it and \bf C2\it. For given $p$
with $\frac{n+3}{n+1} < p \leq 2$, the following holds true
\begin{equation} \label{mainE}
\| \widehat{f_1 d \sigma_1} \cdot \widehat{f_2 d\sigma_2} \|_{L^{p}(\R^{n+1})} \leq C(p, \tilde S_1, \tilde S_2)  \| f_1 \|_{L^2(S_1,d\sigma_1)}
\| f_2 \|_{L^2(S_2,d\sigma_2)}.
\end{equation}
\end{theo}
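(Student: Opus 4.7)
The plan is to follow the wave packet and induction on scales framework introduced by Wolff \cite{Wo} and refined by Tao \cite{Tao-BW, Tao-BP}, replacing the explicit conic or parabolic structure by the abstract geometric input provided by conditions \textbf{C1} and \textbf{C2}. Since the paper advertises an energy-type argument rather than a combinatorial one, I expect $L^2$-orthogonality of wave packets and local energy estimates on the bilinear expression to take the place of the Kakeya-type tube incidence counts of \cite{Wo, Tao-BP}.

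First I would reduce \eqref{mainE} to the scale-localized bound
\[
\bigl\| \widehat{f_1 d\sigma_1} \cdot \widehat{f_2 d\sigma_2} \bigr\|_{L^{p_0}(B_R)} \les R^{\delta} \| f_1 \|_{L^2(S_1)} \| f_2 \|_{L^2(S_2)},
\]
where $p_0 = (n+3)/(n+1)$, valid for every ball $B_R \subset \R^{n+1}$ of radius $R \geq 1$ and every $\delta > 0$. Interpolation with the trivial $L^2 \times L^2 \to L^1$ estimate (which uses only \textbf{C1} and Cauchy--Schwarz) combined with an $\epsilon$-removal argument in the spirit of \cite{TV-CM1} then upgrades this to \eqref{mainE} throughout the open range $p > p_0$. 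Next I would decompose each surface into caps $\theta_i \subset S_i$ of diameter $R^{-1/2}$ and write each piece $\widehat{f_{i,\theta_i} d\sigma_i}$ as a sum of wave packets $\phi_{T_i}$ concentrated on tubes $T_i$ of thickness $R^{1/2}$ and length $R$ oriented along the normal to $\theta_i$, with $\ell^2$ coefficients that control $\|f_i\|_{L^2(S_i)}^2$ up to Schwartz tails.

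The induction step would then estimate the best constant $A(R)$ in the scale-$R$ inequality above by a self-bootstrapping recursion of the form $A(R) \leq \frac{1}{2} A(R) + C A(R/2) + C R^{\delta}$, forcing $A(R) \les R^{C\delta}$. The geometric inputs enter as follows. Condition \textbf{C1} controls single tube-tube intersections: any pair $T_1 \cap T_2$ sits in a parallelepiped whose two long directions are the normals $N_1, N_2$, giving quantitative bounds on where $\phi_{T_1}\phi_{T_2}$ can concentrate. Condition \textbf{C2} controls how many wave packets on $S_i$ simultaneously produce a fixed interaction vector $h$: if $\widehat{\phi_{T_1}\phi_{T_2}}$ is localized near $h$ then the frequency $\zeta_i$ is constrained to lie on the slice $C_i(h)$, and \eqref{C3} (equivalently \eqref{C3aa} via the local-to-global dictionary promised in Section \ref{RGC}) forces the normal $N_i$ to spread along $C_i(h)$ at a rate $\ges |\zeta_1 - \zeta_2|$ transversally to $S_{3-i}$. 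This is precisely the Jacobian bound needed to count compatible wave packet pairs.

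The principal obstacle, and where I expect the real technical content of the argument to sit, is to carry out the analogue of Tao's paraboloid packet counting from \cite{Tao-BP} using only the abstract hypotheses above. In the explicit parabolic case $\varphi(\xi) = |\xi|^2$ the slice $C_i(h)$ is an affine piece of $S_i$ and every relevant Jacobian is computable in closed form, so the packet counting reduces to affine geometry. In the general setting one must first show that \eqref{C3aa} makes $C_i(h)$ into a smooth submanifold of $S_i$ with uniformly bounded second fundamental form and then convert the shape operator bound into an incidence estimate for wave packet tubes clustered near a common direction. Once this geometric substitute for Wolff's Kakeya-type lemma is in place, the bilinear energy estimate that closes the induction should be reasonably standard.
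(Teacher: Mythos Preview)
Your outline has the right general shape (localize to scale $R$, wave packets at scale $R^{1/2}$, induction on scales), but two concrete points diverge from what the paper actually does, and the first of them is a genuine gap.

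\textbf{The recursion.} The inequality you propose, $A(R) \leq \tfrac12 A(R) + C A(R/2) + C R^\delta$, rearranges to $A(R) \leq 2C A(R/2) + 2C R^\delta$, and with $2C>1$ this blows up under iteration; it cannot force $A(R)\les R^{C\delta}$. The paper does not go through the endpoint with an $\epsilon$-removal lemma. Instead it works directly at each fixed $p>p_0$ and proves
\[
A_p(R) \le (1+cC)\,\bar A_p(R/2) + C c^{-C} R^{\frac{n+3}{2}(\frac1p-\frac{n+1}{n+3})},
\]
where $c$ is a free small parameter. The exponent of $R$ is strictly negative for $p>p_0$, so choosing $c$ to be a tiny negative power of $R$ makes both the multiplicative loss and the additive error summable along the iteration. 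Getting the coefficient $(1+cC)$ rather than some $C>1$ in front of $\bar A_p(R/2)$ is the entire point, and it is achieved not by a pigeonholing step on $A(R)$ itself but by Tao's \emph{table construction}: one redistributes the wave packets $\phi_T$ of $\phi$ into components $\Phi^{(q_0)}$ indexed by subcubes $q_0$, with coefficients $m_{q_0,T}/m_T$ built from the $L^2$ mass of $\psi$ along $T$, so that the masses $M(\Phi^{(q_0)})$ sum to $(1+cC)M(\phi)$ and the off-diagonal pieces $\Phi^{(q_0)}\psi|_{q_0'}$, $q_0'\ne q_0$, are small in $L^2$. This mechanism is absent from your sketch.

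\textbf{The geometric input.} You correctly locate the role of \textbf{C2} on the slices $C_i(h)$, but the paper does not convert it into an incidence or Kakeya-type count. The way \textbf{C2} is actually used is: it implies (Lemma~\ref{LFW}) that every normal $N_2(\zeta_2)$ is uniformly transversal to the \emph{cone} $\mathcal{CN}(C_1(h))=\{\alpha N_1(\zeta):\zeta\in C_1(h),\alpha\in\R\}$ swept out by the $S_1$-normals along a fixed slice. That transversality feeds a local energy estimate $\|\psi\|_{L^2(S(r))}\les r^{1/2}M(\psi)^{1/2}$ for the free wave $\psi$ across an $r$-neighborhood of this cone (Section~\ref{SE}), and this $L^2$ bound is what controls the off-diagonal table terms $\sum_T m_{q_0,T}\tilde\chi_T(x_q,t_q)$ in Proposition~\ref{NLP}. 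So the substitute for the combinatorial lemma is an energy estimate for $\psi$ on a curved hypersurface, not a tube-counting statement; your final paragraph is aiming at the wrong object.
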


The use of $\tilde S_1, \tilde S_2$ should be understood as follows: we want $S_1$ and $S_2$ to satisfy \bf C1, C2\rm, but we also
want this to extend in some small neighborhoods of $S_1$ and $S_2$. This is because the argument uses at several stages re-localization
on both physical and Fourier side, and this potentially alters the support of the interacting components; in particular one cannot handle the argument 
with the rigid frequency localization in $S_1$ and $S_2$.

If $\rm II_{\it N_i(\zeta_i)}$ stands for the second fundamental form of $S_i$ at $\zeta_i \in S_i$ with respect to the normal $N_i(\zeta_i)$, then a slightly stronger variant of \bf C2 \rm is the following
\begin{con} (normal curvature)  We assume that for $i=1,2$ and $\forall h \in \R^{n+1}$ the following holds true
\begin{equation} \label{C3ab}
| \rm II_{\it N_i(\zeta_i)}\it(v,v)| \ges |v|^2,  \qquad \forall \zeta_i \in C_i(h), \forall v \in T_{\zeta_i} C_i(h). 
\end{equation}
The bound is meant to be uniform with respect to $h \in \R^{n+1}$. 
\end{con}

As a consequence of Theorem \ref{mainT} we obtain the following
\begin{corol} \label{corT} The result of Theorem \ref{mainT} holds true if $\tilde S_1,\tilde S_2$ satisfy \bf C1 \it and \bf C3 \it instead of \bf C1 \it and \bf C2\it.
\end{corol}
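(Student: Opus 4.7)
The plan is to reduce Corollary \ref{corT} to Theorem \ref{mainT} by showing that Condition \textbf{C3} implies the local version \eqref{C3aa} of Condition \textbf{C2}, with bounds uniform in $h \in \R^{n+1}$ and $\zeta_i \in C_i(h)$. Once this reduction is in place no further analytic work is required: I simply apply Theorem \ref{mainT} to the pair $(\tilde S_1, \tilde S_2)$, which then satisfies \textbf{C1} and \textbf{C2}.

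To carry out the reduction I use two standard facts about hypersurfaces in Euclidean space: the shape operator $S_{N_i(\zeta_i)} : T_{\zeta_i} S_i \to T_{\zeta_i} S_i$ is self-adjoint with respect to the induced metric, and it is related to the second fundamental form by $\mathrm{II}_{N_i(\zeta_i)}(v,w) = \langle S_{N_i(\zeta_i)} v, w\rangle$. Fix $\zeta_i \in C_i(h)$ and $v \in T_{\zeta_i} C_i(h)$, and decompose orthogonally
\[
S_{N_i(\zeta_i)} v = w_\parallel + w_\perp, \qquad w_\parallel \in T_{\zeta_i} C_i(h),\quad w_\perp \in (T_{\zeta_i} C_i(h))^\perp.
\]
Since $v \in T_{\zeta_i} C_i(h)$ is orthogonal to $w_\perp$, I have $\langle S_{N_i(\zeta_i)} v, v\rangle = \langle w_\parallel, v\rangle$; combining Cauchy--Schwarz with hypothesis \textbf{C3} yields
\[
|w_\parallel|\, |v| \geq |\langle w_\parallel, v\rangle| = |\mathrm{II}_{N_i(\zeta_i)}(v,v)| \ges |v|^2,
\]
so $|w_\parallel| \ges |v|$.

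For any $n \in (T_{\zeta_i} C_i(h))^\perp$ the vector $w_\parallel$ is automatically orthogonal to $n$, so the component of $S_{N_i(\zeta_i)} v$ perpendicular to $n$ has norm at least $|w_\parallel|$, and hence
\[
|S_{N_i(\zeta_i)} v \wedge n| \geq |w_\parallel|\, |n| \ges |v|\, |n|,
\]
which is precisely \eqref{C3aa}. Uniformity in $h$ and $\zeta_i$ is inherited from \textbf{C3}, so $\tilde S_1, \tilde S_2$ satisfy \textbf{C2} and Theorem \ref{mainT} applies. There is no substantive obstacle; the only point to keep in mind is that \textbf{C3} controls solely the quadratic form $\langle S_N v, v\rangle$, i.e.\ the tangential-to-$C_i(h)$ piece $w_\parallel$, while the normal piece $w_\perp$ remains unconstrained --- but that is all one needs to secure \eqref{C3aa}, which is why \textbf{C3} is genuinely slightly stronger than \textbf{C2}.
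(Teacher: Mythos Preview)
Your proof is correct and follows exactly the same route as the paper: show that \textbf{C3} (i.e.\ \eqref{C3ab}) implies \eqref{C3aa}, then invoke Theorem \ref{mainT}. The paper asserts this implication in one line without details, whereas you supply the orthogonal decomposition $S_{N_i(\zeta_i)}v = w_\parallel + w_\perp$ and the Cauchy--Schwarz argument that makes it explicit; the added detail is correct and fills in precisely what the paper leaves to the reader.
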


The above Corollary is our statement in terms of a standard curvature condition. It essentially says that, in addition to the transversality condition \bf C1\rm, 
if all curves in $C_i(h) \subset S_i$ have non-zero normal curvature, then the bilinear estimate \eqref{mainE} holds true. Given that $C_i(h)$ has dimension $n-1$, this result justifies why, in some sense, the optimal bilinear estimate relies only on $n-1$ "curvatures" being non-zero. One has to be careful in specifying 
which $n-1$ curvatures are meant to be non-zero: classically one would use the principal curvatures (see the commentaries below in the context of the $k$-linear restriction estimate), but \bf C3 \rm requires the stronger assumption that $n-1$
normal curvatures are non-zero. Even the more relaxed condition \eqref{C3aa} is stronger than asking $n-1$ principal curvatures to be non-zero. 

In making the above commentaries, we are implicitly saying that for given $\zeta_i \in S_i$, $\cup_h T_{\zeta_i} C_i(h) \ne T_{\zeta_i} S_i$, where $h$ varies such that $\zeta_i \in C_i(h)$. If this is not the case, it is an easy exercise to show that there are $\zeta_i \in S_i$ such that $T_{\zeta_1} S_1= T_{\zeta_2} S_2$ (where parallel tangent planes are trivially identified) which would contradict \bf C1\rm. As a consequence, it follows that there are curves $\gamma_i \subset S_i$ that are transversal to $C_i(h)$ for all $h$; potentially, among such curves we could find one, say $\gamma_i$, such that $\gamma_i(t_0)=\zeta_i$ and $\gamma'_i(t_0) \in T_{\zeta_i} S_i$ is a principal direction with zero eigenvalue, that is $S_{N_i(\zeta_i)} \gamma'_i(t_0)=0$. Indeed, this is the case for conic surfaces, but it is not for quadratic surfaces. 

In the next subsection we show that \bf C2 \rm and \bf C3 \rm are equivalent if $n=2$, while if $n \geq 3$, \bf C3 \rm implies \bf C2\rm, but not vice-versa. 

Another natural question to ask is the necessity of the two conditions. It is well known that in the absence of \bf C1\rm, no improvement of \eqref{ABRE} should be expected besides what follows from the linear estimates. In \cite{Lee-BR}, Lee gives an examples hinting that in the absence of \bf C3\rm, no improvement should be expected either. One needs to chase a bit this aspect in Lee's counterexample,
as his focus is on highlighting the fact that transversality (or domain separation) does not suffice for improvements in the bilinear estimate when one considers non-elliptic paraboloids. Oversimplifying Lee's example, essentially one considers in $\R^3$ the hyperbolic paraboloid $\tau=\xi_1^2 - \xi_2^2$, and notices that the embedded line $(t,t,0)$ has zero normal curvature. Letting $D_1,D_2$ be small neighborhoods
of $(1,1,0)$ and $(-1,-1,0)$ creates the counterexample. 
  
We now explain some of the key novelties this paper brings to the theory of bilinear estimates. In Harmonic Analysis, one way the shape operator plays a crucial role is through its eigenvalues which are the principal curvatures of the surface. As we explained earlier
in the context of $R^*_S(2 \rightarrow p')$, the role of the number of non-zero principal curvatures is well-understood in the linear theory. In this paper we reveal a more subtle way in which the shape operator affects the bilinear estimates which goes beyond the counting of its non-zero eigenvalues, see \eqref{C3aa} and next section, for details. To the best of our knowledge this may be the first instance in Harmonic Analysis when the shape operator enters the analysis of a problem in a more complex way, other than by its eigenvalues. 

In most previous works the equivalent of \bf C2 \rm was avoided by using explicit surfaces, see \cite{Wo, Tao-BW, Tao-BP}. 
What emerged in early works, see \cite{Lee-BR, Tao-BW, Va}, was the necessity of the result in Lemma \ref{LFW}, which is a consequence of our condition. In \cite{LeeVa} an equivalent condition to \eqref{C3} appears, see $(1.4)$ there. The condition \eqref{C3aa} and
the analysis of the role of the shape operator in the bilinear restriction problem is one of the new features in this paper.

At a technical level, the argument in our paper has to find a common ground for dealing with general surfaces. One of the reasons the role of the curvature in \eqref{ABRE} was not fully understood has had to do with the different methods used in dealing with the conic-conic and quadratic-quadratic cases. We summarize some of the key points which make our task possible.

\bf Wave packet theory. \rm The standard wave packet constructions for the quadratic and conic surfaces are slightly different, and this feature is not particular to the bilinear theory. It is commonly found in parametric construction via wave packets for the
Wave and Schr\"odinger equations. In this paper we use the same Wave packet construction for all hypersurfaces,  and this construction is dictated by the quadratic phase. This may be seen
as suboptimal for the conic surfaces or any non-quadratic surface, but it turns out that the geometry of the problem addresses  this issue in a very natural fashion.   

\bf Constant versus variable speed. \rm  In the standard approach for the conic surfaces, see \cite{Tao-BW}, the argument heavily relies on the fact that waves propagate with speed $1$. If $S_1$ and $S_2$ are conic surfaces, Tao explains in \cite{Tao-BP} that a key geometrical observation was that if one took the union of all the lines through a fixed origin $x_0$ which were normal to $S_2$, then any line normal to $S_1$ could only intersect this union in at most one point; this is ultimately due to the single vanishing principal curvature on the cone, which forces all of the above lines to be light rays. This property does not hold true for quadratic surfaces given the wider range of propagation speeds. Therefore a different type of argument was used for quadratic surfaces
see \cite{Tao-BP}. In our paper, the argument makes a very efficient use of orthogonality arguments and arranges the geometry of the problem to re-create 
the key geometrical observation just mentioned even in the case of quadratic surfaces, despite the variable speed of the wave in that setup; for details see Lemma \ref{LFW}. It is precisely this
part of the argument that brings out the natural conditions \bf C1 \rm and \bf C2\rm. 

\bf Energy versus combinatorics argument. \rm In proving bilinear estimates there are two main strategies: a combinatorial based approach
which has some qualitative aspects to it (by defining relations between tubes and balls) and an energy based approach which is "qualitative-free"
(this argument quantifies relations between tubes and balls using energy as a measure tool). The combinatorial approach is probably the most 
used, while the energy approach was developed by Tao in \cite{Tao-BW} with the scope of obtaining the end-point theory.
We prefer the latter one since it is more compact and has the advantage of tracking losses more carefully and potentially lay the ground 
for an end-point theory. Our paper draws inspiration from \cite{Tao-BW}, from which we use the notation and some technicalities.

We are not able to provide the end-point result $p=\frac{n+3}{n+1}$ for \eqref{ABRE}. The argument used in \cite{Tao-BW}
for proving the end-point estimate for conic surfaces uses in too many places the fact that waves propagate with speed $1$,
and we could not find a way to circumvent that aspect for general surfaces. Therefore the end-point problem
is still open and we think it suffices to understand it for the case when $S_1$ and $S_2$ are given by the elliptic paraboloid (model $\tau=|\xi|^2$), since it contains most of the difficulties. 

The lack of an end-point theory in the general case and its resolution for the conic case may suggests the following observation: in the context of bilinear estimates, additional curvature makes the problem more complicated. A more clear insight on the role of the curvature is revealed by looking at the $n+1$-multilinear estimate:
\begin{equation} \label{AMRE}
\| \Pi_{i=1}^{n+1} \widehat{f_i d \sigma_i} \|_{L^{p}(\R^{n+1})} \leq C \Pi_{i=1}^{n+1} \| f_i \|_{L^2(S_i,d\sigma_i)}.
\end{equation}
It is conjectured that if the hypersurfaces $S_i \subset \R^{n+1}$ are transversal, then \eqref{AMRE} holds true for $p \geq p_0=\frac{2}n$. 
If $S_i$ are transversal  hyperplanes, \eqref{AMRE}  is the classical Loomis-Whitney inequality and its proof is elementary.  Once the surfaces are allowed to have non-zero principal curvatures, things become far more complicated and the problem has been the subject of extensive research, see \cite{BeCaTa,Gu-main} and references therein. 
In \cite{BeCaTa}, Bennett, Carbery and Tao establish \eqref{AMRE} for with an $\epsilon$-loss in the following sense
\begin{equation} \label{AMRE-loss}
\| \Pi_{i=1}^{n+1} \widehat{f_i d \sigma_i} \|_{L^{p}(B(0,R))} \leq C(\epsilon) R^\epsilon \Pi_{i=1}^{n+1} \| f_i \|_{L^2(S_i,d\sigma_i)}.
\end{equation}
for any $\epsilon >0$; here $B(0,R)$ is the ball of radius $R$ centered at the origin. The result with a constant independent of $R$, that is \eqref{AMRE}, is an open problem. The end-point for the multilinear Kakeya version of \eqref{AMRE}
(a slightly weaker statement than \eqref{AMRE}) has been established by Guth in \cite{Gu-main} using tools from algebraic topology. 

The conclusion we wanted to draw from above is that a certain amount of curvature is needed in order to obtain the optimal bilinear restriction estimate \eqref{ABRE}, but that additional curvature brings complications to the problem. 

 We hope that the result of this paper will provide some insight into another open problem: $k$-multilinear estimates, these being estimates similar to \eqref{ABRE} and \eqref{AMRE}, but with $k$ terms, $1 \leq k \leq n+1$. Transversality between the surfaces involved is known to be a necessary condition for the optimality, thus we take it for granted and focus on the role of curvature in the discussion below. In \cite{BeCaTa}, the authors state that "simple heuristics suggest that the optimal $k$-linear restriction theory requires at least $n+1-k$ non-vanishing principal curvatures, but that further curvature assumptions have no further effect". 
Up to the present paper, the precise role of the curvature in the optimal $k$-linear estimate was fully understood only in the case $k=1$
and $k=n+1$: in the first case one needs all $n$ principal curvatures to be non-zero, while in the case $k=n+1$ no curvature is required. Our paper clarifies the role of the shape operator in the bilinear estimates, that is $k=2$, and the fact that information only about the principal curvatures does not suffice. Moreover, we believe that our setup provides the correct framework for making statements for the optimal $k$-linear restriction theory with $k \geq 3$, where the use of shape operator will probably be even more involved. We should mention that in the absence of any curvature assumptions, the $k$-linear restriction theory has been addressed in \cite{BeCaTa} where the authors establish it for $p > \frac{2}{k-1}$. However, if $k \leq n$, this is not expected to be the optimal exponent once curvature assumptions are brought into the problem.   

With the result of the present paper, the current optimal $k$-linear restriction theory covers in full only the cases  $k=1,2$ and $k=n+1$. 
Therefore it is only in the case $n=2$ (corresponding to transversal surfaces in $\R^3$) that the multilinear theory is now complete up to the end-point: 

- the case of one single surface is the classical Tomas-Stein result

- the case of bilinear estimates ($k=2$) and the role of curvature is provided in this paper

- the case of trilinear estimates where only transversality matters and curvature plays no role was settled in \cite{BeCaTa,Gu-main}.

In the case $n \geq 3$ and $3 \leq k \leq n$, the optimal multilinear theory is still an open problem. 

The multilinear theory discussed above has had major impact in other problems. We mention a few such examples we are aware of, 
but do not intend to provide a complete overview of applications or references. In Harmonic Analysis, the bilinear and 
$n+1$ multilinear theory was used to improve results in the context of Schr\"odinger maximal function, see \cite{Bou-SMF,Lee-SMF,TV-CM2, DL},  restriction conjecture, see \cite{Tao-BP,BoGu, Gu-res}, the decoupling conjecture, see \cite{BoDe} and \cite{BoDeGu}. In Partial Differential Equations, the linear theory inspired the well-known theory of Strichartz estimates which provides a fundamental tool for iterating dispersive equations, see \cite{Tao-book}. The bilinear restriction theory is used in the context of more sophisticated techniques, such as the profile decomposition, see for instance \cite{MeVe}, which is used in concentration compactness methods, see for instance \cite{KeMe}. And not last, we recall that the original conjecture about the optimal range for the bilinear estimate \eqref{ABRE} was motivated by the problem of improved bilinear estimates in the context of wave equations, see \cite{FoKl}.

The paper is organized as follows: in the next subsection we discuss \bf C2 \rm and highlight its main role in our argument. 
In Subsection \ref{Res} we derive all known results for \eqref{ABRE} from the results of Theorem \ref{mainT}, 
and show how new results are obtained. 
The Introduction ends with a Notation section in which we set some of the commonly used terminology. In Section \ref{CT} we restate the problem in terms of free waves, as it is commonly done in the literature, introduce the concept of tables on cubes and some basic results. In Section \ref{SE} we provide the energy estimate for waves traveling  through neighborhoods of surfaces to which they are transversal. In Section \ref{SWP} we provide the wave packet construction. Section \ref{SI} contains the induction on scale type argument, although a little hidden
into the table construction.

\subsection{Reading the geometric conditions} \label{RGC}
It is clear that \bf C1 \rm is the transversality condition. In this section we intend to shed more light into the nature of the conditions \bf C2, C3\rm.  Before doing so, we recall some basic facts from differential geometry that can be found in more detail in any classic differential geometry textbook, see for instance \cite{doCa}. 

In $\R^{n+1}$ (to be thought as its own tangent space at each point) the scalar product $\langle \cdot, \cdot \rangle$ is defined in the usual manner, the length of a vector is given by $|u|^2=\la u,u \ra$ and $|u \wedge v|= \sqrt{|u|^2 |v|^2-\la u,v \ra^2 }$ is the area of the parallelogram made by the vectors $u,v \in \R^{n+1}$.

Given a hypersurface  $S \subset \R^{n+1}=\{(\xi,\tau): \xi \in \R^n, \tau \in \R\}$ parametrized by $\tau=\varphi(\xi), \xi \in D \subset \R^n$, we define the Gauss map $g: S \rightarrow \S^{n} \subset \R^{n+1}$ ($\S^n$ is the unit sphere in $\R^{n+1}$) by 
$N(\zeta)=\frac{(-\nabla \varphi(\xi), 1)}{|(-\nabla \varphi(\xi), 1)|}$ where $\zeta=(\xi,\varphi(\xi)) \in S$. Since
$T_\zeta(S)$ and $T_{g(\zeta)} \S^{n}$ are parallel, we can identify them, and define $ d g_\zeta: T_\zeta S \rightarrow T_\zeta S$
by $d g_\zeta v= \frac{d}{dt} (N (\gamma(t)))|_{t=0}$ where $\gamma \subset S$ is a curve with $\gamma(0)=\zeta, \gamma'(0)=v$.  
The shape operator $S_{N(\zeta)}: T_\zeta S \rightarrow T_\zeta S$ is defined by 
\[
S_{N(\zeta)} = - d g_\zeta,
\]
 where we keep the subscript 
$N(\zeta)$ to indicate that the shape operator depends on the choice of the normal vector field at $S$. It is known that $S_{N(\zeta)}$ is symmetric, therefore there exists an orthonormal
basis of eigenvectors $\{e_i\}_{i=1,n}$ of $T_\zeta S$ with real eigenvalues $\{\lambda_i\}_{i=1,n}$. Locally $S$ is orientable and we assume
a consistency with the orientation in $\R^{n+1}$, that is $\{e_1,..,e_n\}$ is a basis in the orientation of $S$ and $\{e_1,..,e_n,N(\zeta)\}$ 
is a basis in the orientation of $\R^{n+1}$. Then $e_i$ are the principal directions and $\lambda_i=k_i$ are the principal curvatures of $S$ (to be more precise they are the curvatures of the embedding $id: S \rightarrow \R^{n+1}$, where $id$ is the identity mapping). The Gaussian curvature is defined by $det S_N=\Pi_{i=1}^n \lambda_i$. 

Finally, the second fundamental form $\rm II_{\it N(\zeta)}: \it T_\zeta S \rightarrow \R$ is defined by \footnote{This is not the usual order in which the objects are defined in differential geometry, but we do so in order to avoid a lengthier introduction; for details the reader is referred to \cite{doCa}, for instance.}
\[
\rm II_{\it N(\zeta)}\it(v)= \la S_{N(\zeta)} v, v \ra, \qquad v \in T_\zeta S.
\]

 We now aim to interpret condition \bf C2 \rm in the form \eqref{C3}. For simplicity we choose $i=1$ in \eqref{C3} and all statements we make below are valid for $i=2$ as well.
 Given that the normals are vectors of length  $1$, a consequence of \eqref{C3} is the following:
 \begin{equation} 
| N_1(\zeta_1) - N_1(\zeta_2) | \ges | \zeta_1 - \zeta_2 |, \qquad \forall \zeta_1,\zeta_2 \in C_1(h). 
\end{equation}
 This unveils a separation effect of the normals (to $S_1$) along $C_1(h)$, or a  dispersion effect along $C_1(h)$, to use a PDE language. 
 \eqref{C3} requires a stronger statement: the dispersion of the normals along $C_1(h)$ has to occur in directions that are transversal
 to the plane made by any two normals at the surfaces, that is any plane made by $N_1(\tilde \zeta_1)$ and $N_2(\tilde \zeta_2)$. 
 
 Next we show how \eqref{C3} implies \eqref{C3aa} and address the expected relation between the dispersion
 aspect of \eqref{C3} and the "non-zero curvature" of $S_1$ along $C_1(h)$ (note that this is loosely used here). 
 
 Let $g: S_1 \rightarrow \S^{n}, g(\zeta)=N_1(\zeta)$ be the Gauss map, 
 where $\S^{n} \subset \R^{n+1}$ is the unit sphere. We recall that $d g(\zeta)=-S_{N_1(\zeta)}$, where 
 $S_{N_1(\zeta)}: T_{\zeta} S_1 \rightarrow T_{\zeta} S_1$ is the shape operator. Passing to the limit $\zeta_2 \rightarrow \zeta_1$ with points inside $C_1(h)$
 in \eqref{C3} gives the following
 \begin{equation} \label{C3a}
vol(S_{N_1(\zeta_1)} v, N_1(\tilde \zeta_1), N_2(\tilde \zeta_2)) \ges | v |, \quad \forall \zeta_1 \in C_1(h), v \in T_{\zeta_1} C_1(h), \tilde \zeta_i \in S_i. 
\end{equation}
We let $\tilde \zeta_1=\zeta_1$ and $\tilde \zeta_2 = h - \zeta_1$ in the above. In this case both $N_1(\tilde \zeta_1), N_2(\tilde \zeta_2)$ are orthogonal 
to $T_{\zeta_1} C_1(h)$ and since they are linearly independent, they span the normal plane to $C_1(h) \subset \R^{n+1}$ at $\zeta_1$. 
Since $S_{N_1(\zeta_1)}: T_{\zeta_1} S_1 \rightarrow T_{\zeta_1} S_1$, \eqref{C3a} implies \eqref{C3aa}.
 
\eqref{C3aa} has two implications. First, $T_{\zeta_1} C_1(h)$ is transversal to the kernel of the shape operator $S_{N_1(\zeta_1)}$, in other words it is transversal
to any principal direction. Second, the shape operator $S_{N_1(\zeta_1)}$ cannot rotate tangent vectors in $T_{\zeta_1} C_1(h)$ into normal vectors in $(T_{\zeta_1} C_1(h))^\perp \subset T_{\zeta_1} S_1$.

If $n=2$, the last condition implies that $|\la S_{N_1(\zeta_1)} v, v \ra| = | \rm II_{\it N_1(\zeta_1)}\it(v,v)| \ges |v|^2$, where $\rm II_{\it N_1(\zeta_1)}$
is the second fundamental form at $\zeta_1$ along the normal $N_1(\zeta_1)$. In other words, $C_1(h)$ (which is a curve) has to have non-zero normal curvature, and 
in a more precise fashion, its normal curvature $\kappa_n$ satisfies $|\kappa_n| \ges 1$. 

In higher dimensions, this interpretation would provide a sufficient, but not necessary condition. 
If $|\la S_{N_1(\zeta_1)} v, v \ra| =| \rm II_{\it N_1(\zeta_1)}\it(v,v) | \ges |v|^2$, then \eqref{C3aa} holds true, thus we have a

\begin{proof}[Proof of Corollary \ref{corT}] 
 \eqref{C3ab} implies \eqref{C3aa} and we can apply Theorem \ref{mainT}.
  \end{proof}

However, given that $T_{\zeta_1} C_1(h)$ has dimension at least two, it is possible to have $v_1,v_2 \in T_{\zeta_1} C_1(h)$, $v_1$ and $v_2$ orthogonal to each other, such that $S_{N_1(\zeta_1)} v_1=v_2$. In this case $ \rm II_{\it N_1(\zeta_1)}\it(v_1,v_1)=0$, but \eqref{C3a} is not violated.
 
 We proved that \eqref{C3aa} is a consequence of \eqref{C3}, and a natural question is whether they are equivalent. The equivalence holds locally:
 parts of \eqref{C3a} were obtained by passing to limits, thus the reverse process holds locally; inferring \eqref{C3a} with general 
 $\tilde \zeta_1 \in S_1, \tilde \zeta_2 \in S_2$ from \eqref{C3aa}  can be done provided that the Gauss map has very small
 variations, that is $|N_i(\tilde \zeta)- N_i(\zeta)| \ll 1, \forall \tilde \zeta,\zeta \in S_i, i=1,2$ and this holds true locally. 
 
The fact that the equivalence of \eqref{C3} and \eqref{C3aa} is guaranteed locally only does not affect the result of Theorem \ref{mainT}. Indeed, one can break $S_1$ and $S_2$ into smaller pieces where the equivalence holds, use the Theorem \ref{mainT} for these pieces and than add back the estimates for the smaller pieces to obtain the global estimate. Based on this observation, for the rest of the paper we will use the following additional hypothesis
\begin{equation} \label{Nvar}
|N_i(\zeta_i) - N_i(\zeta_i^0)| \ll 1, \quad \forall \zeta_i \in S_i, \quad i=1,2, 
\end{equation}
where $\zeta_i^0 \in S_i$ is some fixed point.

 While \eqref{C3a} brings in natural objects such as the shape operator used to measure the amount of curvature, it lacks the computational 
 advantage that a formulation in terms of the Hessian of $\varphi_1$ and $\varphi_2$ would have. For instance on the very simple models 
 $\varphi_1(\xi)=\sum_{i} c_i \xi_i^2$, where the $H\varphi_1$ is a constant matrix, the shape operator involves doable but complicated computations. 
 
 Fix $i=1$. From the formula giving the normals $N_i(\zeta)= \frac{(-\nabla \varphi_i(\xi), 1)}{|(-\nabla \varphi_i(\xi), 1)|}$ and \eqref{Nvar}, it follows that \eqref{C3} is equivalent to 
 \begin{equation} 
vol((\nabla \varphi_1(\xi_1) - \nabla \varphi_1(\xi_2),0), (-\nabla \varphi_1(\tilde \xi_1),1), (-\nabla \varphi_2 (\tilde \xi_2),1) \ges | \xi_1 - \xi_2 |,  
\end{equation}
 $\quad \forall \xi_1,\xi_2 \in \Pi_1 C_1(h), \tilde \xi_i \in D_i$ where $\Pi_1(C_1(h)) \subset \R^n$ be the "projection" of $C_1(h)$ onto $D_1 \subset \R^n$, that is 
$C_1(h)=\{ (\xi,\varphi_1(\xi)); \xi \in \Pi_1(C_1(h)) \}$. Passing to the limit $\xi_2 \rightarrow \xi_1$ gives
\[
vol((H \phi_1(\xi_1) v,0), (-\nabla \varphi_1(\tilde \xi_1),1), (-\nabla \varphi_2 (\tilde \xi_2),1) \ges | v |,
\]
for any $v \in T_{\xi_1} \Pi_1(C_1(h))$. Given that $(H\varphi_1(\xi_1) v,0)$ is transversal to $(0,0,1)$ and 
$|H \varphi_1(\xi_1) v| \approx |v|$ (consequence of the above inequality), the above condition implies
\[
|  H \varphi_1(\xi_1) v \wedge ( \nabla \varphi_1(\tilde \xi_1)-\nabla \varphi_2 (\tilde \xi_2) ) | \ges |v|
\]
 We need to make sense of the meaning of the term $\nabla \varphi_1(\tilde \xi_1)-\nabla \varphi_2 (\tilde \xi_2)$.
Given some $\tilde h=(\tilde h_0,\tilde h_{n+1}), \tilde h_0 \in \R^n$, we have that $\Pi_1 C_1(\tilde h)$ is given by the equation
\[
\varphi_1(\xi_1) + \varphi_2(\tilde h_0 -\xi_1) = \tilde h_{n+1}. 
\]
The normal to $\Pi_1 C_1(\tilde h)$ is given by $\nabla \varphi_1(\xi_1) - \nabla \varphi_2(\tilde h_0 -\xi_1)$. Given $\tilde \xi_1 \in D_1, \tilde \xi_2 \in D_2$
we can find $\tilde h$ such that $\nabla \varphi_1(\tilde \xi_1)-\nabla \varphi_2 (\tilde \xi_2))$ is the normal to $C_1(\tilde h)$.  The transversality condition \eqref{C1} implies that $|\nabla \varphi_1(\tilde \xi_1)-\nabla \varphi_2 (\tilde \xi_2))| \ges 1$. Hence the above conditions reads
\begin{equation} \label{C3bb}
|  H \varphi_1(\xi_1) v \wedge n  | \ges |v| |n|, \qquad \forall v  \in T_{\xi_1} \Pi_1(C_1(h), n \in T_{\xi_1}^\perp \Pi_1(C_1(h)), 
\end{equation}
From this we derive two conclusions:  $T_{\xi_1} \Pi_1 C_1(h)$ is transversal to the kernel of the Hessian $H \varphi_1(\xi_1)$.
 $H \varphi_1(\xi_1)$ cannot rotate tangent vectors in $T_{\xi_1} \Pi_1 C_1(h)$ into normal vectors to 
$(T_{\xi_1} \Pi_1 C_1(h))^\perp \subset \R^n$.
 
 The resemblance of \eqref{C3bb} with \eqref{C3aa} is not accidental. If $\nabla \varphi_1(\xi_0)=0$, then $N_1(\zeta_0)=(0,..,0,1)$, 
 $T_{\zeta_0} S_1= \{(v,0): v \in \R^n\}$ and $S_{N_1(\zeta_0)}=H \varphi_1(\xi_0)$ with $S_{N_1(\zeta_0)} (v,0) = H \varphi_1(\xi_0) v$. 

As argued earlier, the equivalence of \eqref{C3} or \eqref{C3aa} with \eqref{C3bb} holds locally.

We now explain the practical consequences of \bf C1 \rm and \bf C2\rm. 
Given $C_1(h)$ defined as above, let $\mathcal{CN}(C_1(h))=\{\alpha N_1(\zeta), \zeta \in C_1(h), \alpha \in \R \}$
be the cone generated by the normals to $S_1$ taken at points from $C_1(h)$ and passing through the origin. 
Note that $\mathcal{CN}(C_1(h)) \setminus \{0\}$ has maximal codimension $1$. This hypersurface has one property which will play a crucial role in our argument. For any $\zeta_2 \in S_2$, $N_2(\zeta_2)$ is transversal to each $N_1(\zeta_1)$, for any $\zeta_1 \in S_1$  (consequence of \bf C1\rm). However, this does not imply that $N_2(\zeta_2)$ is transversal to the surface $\mathcal{CN}(C_1(h))$! Such a claim is the object of the following result. 

\begin{lema} \label{LFW}
For any $\zeta_2 \in S_2$, $N_2(\zeta_2)$ is transversal to the cone $\mathcal{CN}(C_1(h)) \setminus \{0\}$. Therefore a line in the direction of $N_2(\zeta_2)$, for some $\zeta_2 \in S_2$, intersects $\mathcal{CN}(C_1(h))$ locally at most in one point.

\end{lema}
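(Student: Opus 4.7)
The plan is to compute the tangent hyperplane of $\mathcal{CN}(C_1(h))$ at each smooth point and rule out that $N_2(\zeta_2)$ lies in it, with a uniform angular lower bound. Parametrize the cone by $\Phi(\alpha,\zeta_1)=\alpha N_1(\zeta_1)$ on $\R\times C_1(h)$. Since $dN_1=dg=-S_{N_1}$, and since \eqref{C3aa} immediately forces $S_{N_1(\zeta_1^0)}$ to be injective on $T_{\zeta_1^0}C_1(h)$ (a $v$ in the kernel would violate \eqref{C3aa} when paired with the unit vector $m$ introduced below), at a point $\alpha_0 N_1(\zeta_1^0)$ with $\alpha_0\neq 0$ the tangent $n$-plane is
\[
T=\mathrm{span}\{N_1(\zeta_1^0)\}\oplus S_{N_1(\zeta_1^0)}\bigl(T_{\zeta_1^0}C_1(h)\bigr)\subset\R^{n+1}.
\]
Transversality amounts to proving $N_2(\zeta_2)\notin T$, and quantitatively that the angle between $N_2(\zeta_2)$ and $T$ is $\gtrsim 1$.

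I would first handle the canonical choice $\zeta_2=\eta_2:=h-\zeta_1^0\in S_2$. From $C_1(h)=S_1\cap\tau^h(-S_2)$ one computes $T_{\zeta_1^0}C_1(h)=T_{\zeta_1^0}S_1\cap T_{\eta_2}S_2$, so $N_2(\eta_2)\perp T_{\zeta_1^0}C_1(h)$. Let $m$ be a unit vector spanning the orthogonal complement of $T_{\zeta_1^0}C_1(h)$ inside $T_{\zeta_1^0}S_1$. Then
\[
N_2(\eta_2)=a\,N_1(\zeta_1^0)+b\,m,\qquad a^2+b^2=1,\qquad |b|=|N_1(\zeta_1^0)\wedge N_2(\eta_2)|\gtrsim 1
\]
by \textbf{C1}. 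If $N_2(\eta_2)\in T$, projecting onto $T_{\zeta_1^0}S_1$ yields $bm=S_{N_1(\zeta_1^0)}v$ for some $v\in T_{\zeta_1^0}C_1(h)$; applying \eqref{C3aa} with this $v$ and $n=m$ gives $|v|\lesssim|S_{N_1(\zeta_1^0)}v\wedge m|=|bm\wedge m|=0$, whence $v=0$ and $b=0$, contradicting $|b|\gtrsim 1$.

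For a general $\zeta_2\in S_2$, write $N_2(\zeta_2)=N_2(\eta_2)+\varepsilon$ with $|\varepsilon|\ll 1$ by \eqref{Nvar}. The same projection argument yields $bm+\varepsilon'=S_{N_1(\zeta_1^0)}v$, where $\varepsilon'$ is the $T_{\zeta_1^0}S_1$-component of $\varepsilon$ and $|\varepsilon'|\ll 1$. Smoothness of $\varphi_1$ gives $\|S_{N_1(\zeta_1^0)}\|\lesssim 1$, so $|v|\gtrsim|S_{N_1(\zeta_1^0)}v|\geq|b|-|\varepsilon'|\gtrsim 1$, while \eqref{C3aa} forces $|v|\lesssim|S_{N_1(\zeta_1^0)}v\wedge m|=|\varepsilon'\wedge m|\leq|\varepsilon'|\ll 1$, a contradiction. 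This yields transversality with a uniform angle bound over all $\zeta_1^0\in C_1(h)$, $\alpha_0\neq 0$, and $\zeta_2\in S_2$; the local uniqueness of intersection in the second statement is then immediate from the implicit function theorem. The main conceptual step I expect is identifying $m\in(T_{\zeta_1^0}C_1(h))^\perp\cap T_{\zeta_1^0}S_1$ as the direction that exhibits \textbf{C2} (rather than a crude count of non-zero principal curvatures) as the precise obstruction to tangency; once this is isolated, the bounds above are short.
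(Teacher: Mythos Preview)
Your proof is correct and follows essentially the same approach as the paper's: compute the tangent space of the cone as $\mathrm{span}\{N_1(\zeta_1^0)\}\oplus S_{N_1(\zeta_1^0)}(T_{\zeta_1^0}C_1(h))$, first handle the canonical choice $\zeta_2=h-\zeta_1^0$ using \textbf{C1} together with \eqref{C3aa}, and then extend to arbitrary $\zeta_2\in S_2$ via \eqref{Nvar}. Your explicit introduction of the vector $m\in(T_{\zeta_1^0}C_1(h))^\perp\cap T_{\zeta_1^0}S_1$ and the projection argument spell out in coordinates what the paper phrases more tersely as ``$S_{N_1}(T_{\zeta_1^0}C_1(h))$ is transversal to the plane spanned by $N_1(\zeta_1^0)$ and $N_2(\zeta_2)$,'' but the underlying logic is identical.
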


Since the conditions \bf C1, C2 \rm are symmetric with respect to $S_1,S_2$, the above result is also symmetric: 
$N_1(\zeta_1)$ is transversal to the cone $\mathcal{CN}(C_2(h))  \setminus \{0\}$. 

\begin{proof}  Consider $\zeta_1 \in C_1(h)$ and let $\zeta_2=-\zeta_1 + h \in C_2(h) \subset  S_2$. We first prove the result for this choice of $\zeta_2$. The plane spanned by $N_1(\zeta_1)$ and $N_2(\zeta_2)$ is orthogonal to $T_{\zeta_1} C_1(h)$.

We prove that $N_2(\zeta_2)$ is transversal to $T_{\zeta_1}(\mathcal{CN}(C_1(h)))$, the tangent plane to $\mathcal{CN}(C_1(h))$ at $\zeta_1$. Since $\mathcal{CN}(C_1(h))$ is a conic surface, its tangent space at the point $\alpha N_1(\zeta_1), \alpha \in \R \setminus \{0\}, \zeta_1 \in C_1(h)$, is spanned by $N_1(\zeta_1)$ and the linear subspace 
$d g(\zeta_1) T_{\zeta_1} C_1(h)=\{d g(\zeta_1) v= - S_{N_1(\zeta_1)} v:v \in T_{\zeta_1} C_1(h)\}$. We recall that \eqref{C3aa} implies that this linear subspace is transversal to the plane spanned
by $N_1(\zeta_1)$ and $N_2(\zeta_2)$. Since $N_1(\zeta_1)$ and $N_2(\zeta_2)$ are transversal to each other, we conclude that $N_2(\zeta_2)$ is transversal to the subspace spanned by $N_1(\zeta_1)$ and $d g(\zeta_1) T_{\zeta_1} C_1(h)$, thus it is transversal to $T_{\zeta_1}(\mathcal{CN}(C_1(h)))$. 

For an arbitrary $\zeta_2 \in S_2$ the same conclusion follows in light of \eqref{Nvar}.  

A similar proof can be made starting from \eqref{C3} instead.

\end{proof}

\subsection{Consequences of Theorem \ref{mainT}} \label{Res}
Here we explain how previous results follow as consequences of Theorem
\ref{mainT}, as well as how new results can be derived from it. Consider the case when the two surfaces are of quadratic type, 
that is $\varphi_i(\xi)=\sum_{k=1}^n c_k^i \xi_k^2$ with $c_k^i \ne 0, \forall k=1,..,n, i=1,2$. 
If for each $i=1,2$, all $c_k^i, k=1,..,n$ have the same sign, then \eqref{C3ab} holds true for any $v \in T_{\zeta_i} S_i$, therefore the transversality condition \bf C1 \rm is the only one required. But this amounts to the separation of the domains $D_1,D_2$, that is $dist(D_1,D_2) > 0$. This implies the result in \cite{Tao-BP}. 

Next, consider the case when $c_i$'s have variable signs or more generally when $H \varphi_i$ is nonsingular with eigenvalues of different signs. The domain separation $dist(D_1,D_2) > 0$ suffices to ensure \bf C1\rm, but \bf C2 \rm is not true everywhere. 
Given that $S_{N_i(\zeta_i)}$ is non-singular, \eqref{C3aa} holds true provided that
\[
| \la S^{-1}_{N_i(\zeta_i)} n_{\zeta_i}, n_{\zeta_i} \ra| \ges 1, \qquad i=1,2
\]
where $n_{\zeta_i} \in T_{\zeta_i}(C_i(h))^\perp \subset T_{\zeta_i} S_i$ is the unit normal to $C_i(h) \subset S_i$. In local coordinates, this becomes 
(in light of \eqref{C3bb})
\begin{equation} \label{CLee}
| \la H^{-1} \varphi_i(\xi_i) n , n \ra | \ges 1, 
\end{equation}
where $n \in T_{\xi_i}^\perp \Pi_i(C_i(\tilde h)), |n|=1$. This last formulation is, essentially, the one found in \cite[ Theorem 1.1]{Lee-BR}. The above condition has the advantage of being somehow more compact, but it lacks a clear geometrical meaning. On the other hand, \eqref{C3aa}
is more transparent: if one avoids having curves of zero normal curvature in $S_1$ and $S_2$ then the result holds true; however, one can allow curves 
of zero normal curvature in $S_1$ and $S_2$ provided \eqref{C3aa} holds true (this can happen only if $n \geq 3$).

Next we consider conic surfaces given by $\tau \cdot \rho = \la \eta, H_i \eta \ra$, where $H_i$ are non-singular $(n-1) \times (n-1)$ matrices. Rescaling the equation we obtain $ \tilde \tau = \la \tilde \eta, H_i \tilde \eta \ra=\varphi_1(\tilde \eta)$ where $\tilde \tau=\frac{\tau}{\rho}, \tilde \eta= \frac{\eta}{\rho}$. 
In these new variables we are dealing with the setup similar to the previous one in the quadratic case. Therefore for the classical case of the cone, that is 
$H_1=H_2=I_{n-1}$ (identity matrix), only \bf C1 \rm needs to be imposed and it can be easily shown that the domain separation for
the new variables $\tilde \eta$ corresponds to the standard angular separation condition for the domains as used in \cite{Wo, Tao-BW}. 
For the case of mixed signs, that is $H_1,H_2$ are diagonal matrices with non-zero entries, but variable sign, then one simply uses the above discussions
for the quadratic case. In particular, if all entries are $\pm 1$, then $H_i^{-1}=H_i$ and \eqref{CLee} implies the following 
\[
| \la H_i n , n \ra | \ges 1.
\]
Given that the normals are obtain as follows $n=\nabla \varphi_i ( \tilde \eta_1) - \nabla \varphi_i ( \tilde \eta_2)= H (\tilde \eta_1 -\tilde \eta_2)$, 
with $\tilde \eta_1 \in \tilde D_1, \tilde \eta_2 \in \tilde D_2$, and $H^2=I_{n-1}$ it is easy to see that the above condition corresponds to 
\cite[ Theorem 1.3]{Lee-BR}. 

A new application is the case of mixed surfaces. To keep things simple let $S_1$ be the standard paraboloid $\tau=\varphi_1(\xi)=|\xi|^2$ and $S_2$ be the standard cone $\tau=\varphi_2(\xi)=|\xi|$. We assume $D_1,D_2$ are subsets of neighborhoods of the origin, with $0 \notin D_2$. We claim that the condition \bf C2 \rm is satisfied without any additional assumptions on $D_1$ and $D_2$. Indeed, given that $H\varphi_1=I_n$, \bf C2 \rm holds true on $S_2$. 
As for $S_1$, \bf C2 \rm fails to hold true if $C_2(h)$ contains straight lines; but this is impossible since $-\tau^{-h}(C_2(h)) \subset C_1(h) \subset S_1$
and there are no straight lines in $S_1$. Therefore, we only need to verify \bf C1\rm. If $||\nabla \varphi_1(\xi_1)|-1| \ges 1, \forall \xi_1 \in D_1$, then the transversality condition is fulfilled. If inside $D_1$ there are points with $|\nabla \varphi_1 (\xi_1)|=1$, then an angular separation condition between 
$\tilde D_1=\{ \xi_1: ||\nabla \varphi_1(\xi_1)|-1| \ll 1 \}$ and $D_2$ is required. This argument is easily extended to more general $\varphi_1$ as long as $H \varphi_1$ is non-singular with all eigenvalues having the same sign. 

But there is also a higher degree of generality in our result. The surfaces we consider are allowed to have one direction where
the degree of contact $k$ with the tangent plane satisfies $2 < k < \infty$, the simplest model being $\varphi(\xi)=\xi_1^k+\xi_2^2+..+\xi_n^2$ at the origin. In all previous works, the degree was either $k=2$ (quadratic-quadratic) or $k=\infty$ (conic-conic). 

As we have already discussed, a necessary condition for \bf C2 \rm to hold true is that $S_1$ and $S_2$ have each at most one zero principal curvature. The theory we developed here can be extended to the case when $S_1$ and $S_2$ have less than $n-1$ non-zero principal curvatures. It is interesting to notice that our argument is able to read faithfully the different ways in which \eqref{C3aa} fails:
$v$ is an eigenvalue of $S_{N_i(\zeta_i)}$ versus $v$ is rotated by $S_{N_i(\zeta_i)}$, 
that is $| \la S_{N_i(\zeta_i)} v, n \ra| \ges | v | |n|$. In the first case dispersion in the corresponding direction is completely absent, 
while in the second case dispersion occurs but in the non-optimal direction. It is the energy estimate in Section \ref{SE} which discriminates between the two cases and will lead to different results. However, given that, in such a situation, the lower bound for $p$ in \eqref{ABRE} would become higher than the optimal $p_0=\frac{n+3}{n+1}$, we do not pursue this issue in this paper. 

\subsection{Notation}

We now explain the use of various constants that appear throughout the rest of the argument. 
$N$ is a large integer that depends only on the dimension. $C$ is a large constant that may change
from line to line, may depend on $N$, but not on $c$ and $C_0$ introduced below. $C$ is used in the definition 
of: $A \ls B$, meaning $A \leq C B$,  $A \ll B$, meaning $A \leq C^{-1} B$, and $A \approx B$, meaning $A \les B \wedge B \les A$.
For a given number $r \geq 0$, by $A = O(r)$ we mean that $A \approx r$. 

$C_0$ is a constant that is independent of any other constant and its role is to reduce the size of cubes in the inductive argument. 
It can be set $C_0=4$ throughout the argument, but we keep it this way so that its role in the argument is not lost.

Finally, $c \ll 1$ is a very small variable meant to make expressions $\ll 1$ and most estimates will be stated to hold in a range of $c$.

By powers of type $R^{\alpha+}$ we mean $R^{\alpha+\epsilon}$ for arbitrary $\epsilon > 0$. Practically they should be seen
as $R^{\alpha+\epsilon}$ for arbitrary $0 < \epsilon \ls 1$. The estimates where such powers occur will obviously depend on $\epsilon$. 

Let $\eta_0:\R^n \rightarrow [0,+\infty)$ be a Schwartz function, normalized in $L^1$, that is $\| \eta_0 \|_{L^1}=1$,
and with Fourier transform supported on the unit ball. 

A disk $D \subset \R^{n+1}$ has the form
\[
D=D(x_D,t_D;r_D)=\{(x,t_D) \in \R^{n+1}: |x-x_D| \leq r_D\},
\]
for some $(x_D,t_D) \in \R^{n+1}$ and $r_D > 0$. We define the associated smooth cut-off
\[
\tilde \chi_{D}(x,t)= (1+\frac{|x-x_D|}{r_D})^{-N}. 
\] 

A cube $Q \subset \R^{n+1}$ of size $R$ has the standard definition $Q=\{(x,t) \in \R^{n+1}: \|(x-x_Q,t-t_Q) \|_{l^\infty} \leq \frac{R}2 \}$, where $(x_Q,t_Q)$ is the center of the cube. Given a constant $\alpha >0$ we define $\alpha Q$ to be the dilated by $\alpha$ of $Q$ from its center, that is
$\alpha Q=\{(x,t) \in \R^{n+1}: \|(x-x_Q,t-t_Q) \|_{l^\infty} \leq \alpha \cdot \frac{R}2 \}$.

\section{Restating the problem} \label{CT}
 
 \subsection{Rephrasing the problem in terms of free waves} Here we reformulate our problem
in terms of free waves, this being motivated by the use of wave packets in order to prove Theorem \ref{mainT}.
The setup used in this section follows closely \cite{Tao-BW}. 

We parametrize the physical space by $(x,t) \in \R^{n} \times \R$. In what follows we use the convention that $\hat f$ denotes the Fourier transform of $f$ with respect to the $x$ variable, while $\widehat{f}$ denotes the Fourier transform of $f$ with respect to the $(x,t)$ variable. In most cases it will be clear from the context 
which Fourier transform is used. 

We define the free wave $\phi(x,t) = \widehat{f_1 d \sigma_1}(x,t)$ as follows
 \[
 \phi(x,t) = \widehat{f_1 d \sigma_1}(x,t)= \int_{S_1} e^{i(x,t) \cdot z} f_1(z) d\sigma_1(z) = \int_{\R^n} e^{i(x\cdot \xi + t \varphi_1(\xi))} \hat \phi_0(\xi) d\xi
 \]
 where $\hat \phi_0(\xi):= f_1(\xi,\varphi_1(\xi)) \sqrt{1+|\nabla \varphi_1(\xi)|^2}$ satisfies $\| \phi_0 \|_{L^2(\R^n)} \approx \| f_1 \|_{L^2(S_1,d\sigma_1)}$. 
 From the above it follows that $\hat \phi(\xi,t) = e^{it \varphi_1(\xi)} \hat \phi_0(\xi)$ therefore $\phi$ satisfies an ODE on the Fourier side, $\partial_t \hat \phi(\xi,t) = i \varphi_1(\xi) \hat \phi(\xi,t)$, and a linear PDE on the physical side, $\partial_t \phi = i \varphi_1(\frac{D}{i}) \phi$ with initial data $\phi(x,0)=\phi_0(x)$.
 This justifies the wording: $\phi$ is a free wave.  
 
 We define the mass of a free wave by $M(\phi(t)):= \| \phi(t) \|^2_{L^2}$ and note that it is time independent:
 \[
 M(\phi(t)):= \| \phi(t) \|^2_{L^2}= \| \hat \phi(t) \|^2_{L^2} = \| \hat \phi_0 \|^2_{L^2}= \| \phi_0 \|^2_{L^2}= M(\phi_0). 
 \]
 It is clear from its definition that $\widehat{\phi}(\xi,\tau)$ is supported on $S_1$  given by $\tau=\varphi_1(\xi)$. In fact, in order to
 have concise notation, when referring to such $\phi$'s, we will abuse notation and say that $\phi$ is a free wave with $\widehat \phi$ supported on $S_1$. 
 
 In a similar manner we define the free wave $\psi(x,t) = \widehat{f_2 d \sigma_2}(x,t)$ and introduce $\psi_0$ by 
 $\hat \psi_0(\xi):= f_2(\xi,\varphi_2(\xi)) \sqrt{1+|\nabla \varphi_2(\xi)|^2}$ satisfying $\| \psi_0 \|_{L^2} \approx \| f_2 \|_{L^2(S_2,d\sigma_2)}$.
 
With these new entities, the result of Theorem \ref{mainT} follows from the following claim:
if $\phi,\psi$ are two free waves with Fourier transform supported on $S_1, S_2$
respectively, the following holds true:
\begin{equation} \label{mainEW}
\| \phi \psi \|_{L^p(\R^{n+1})} \les M(\phi)^\frac12 M(\psi)^\frac12. 
\end{equation}

The proof of \eqref{mainEW} relies on estimating $\phi \psi$ on cubes on the physical side and see how this 
behaves as the size of the cube goes to infinity by using an inductive type argument with respect to the size of the cube.
Before we formalize this strategy, we note that at every stage of the inductive argument we re-localize functions both on the physical and frequency space, and, as a consequence, we need to quantify the new support on the frequency side. This will be done by using the using the margin of a function. 
Let $M=\min(\mbox{dist}(D_1,\tilde D_1^c), \mbox{dist}(D_2,\tilde D_2^c))$, where the complements of $\tilde D_1^c$ and $\tilde D_2^c$ are taken in $\R^n$. 

For a function $f(x,t)$ we define the margin 
\[
\mbox{margin}^k(f(t)) := \mbox{dist}(\mbox{supp}_\xi (\hat f(t)), \tilde D_k^c), \quad k=1,2, 
\]
where $\mbox{supp}_\xi $ is the support with respect to the $\xi$ variable of $\hat f$.
Note that the frequency support of a free wave is the same for all times, therefore its margin is time independent. 

\begin{defin} Let $p_0 \leq p \leq 2$. Given $R \geq C_0$ we define $A_p(R)$ to be the best constant for which the estimate
\begin{equation}
\| \phi \psi \|_{L^p(Q_R)} \leq A_p(R) M(\phi)^\frac12 M(\psi)^\frac12
\end{equation}
holds true for all cubes $Q_R$ of size-length $R$, $\phi,\psi$ free waves with $\widehat \phi, \widehat \psi$ supported on $S_1,S_2$, respectively, and 
obeying the margin requirement
\begin{equation} \label{mrpg}
margin^1(\phi), margin^2(\psi) \geq M-R^{-\frac14}.
\end{equation}
\end{defin}

The goal is to obtain an uniform estimate on $A_p(R)$ with respect to $R$. In the absence of the margin requirement above, 
$A_p(R)$ would be an increasing function. However, since the argument needs to tolerate 
the margin relaxation, we also define
\[
\bar A_p(R):= \sup_{1 \leq r \leq R} A_p(r) 
\]
and the new $\bar A_p(R)$ is obviously increasing with respect to $R$.  

Then \eqref{mainEW}, and as a consequence the main result of this paper, Theorem \ref{mainT}, follow from the next result. 
\begin{prop} \label{keyP} If $R \gg 2^{2C_0}$ and $R^{-\frac14+} \ll c \ll 1$, the following holds true:
\begin{equation} \label{ApR}
A_p(R) \leq (1+cC) \bar A_p(\frac{R}2) + C c^{-C} R^{\frac{n+3}2(\frac1p-\frac{n+1}{n+3})}.
\end{equation}
\end{prop}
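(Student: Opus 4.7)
The plan is to execute an induction-on-scales scheme of the type pioneered by Tao \cite{Tao-BW}, adapted here to general hypersurfaces via condition \bf C2 \rm and its geometric consequence Lemma \ref{LFW}. First, I would partition the cube $Q_R$ into a family of sub-cubes $\{q\}$ of side length comparable to $R/2$. On each sub-cube the goal is to invoke the definition of $\bar A_p(R/2)$. To do so, I decompose the free waves into wave packets at scale $R^{1/2}$ via the construction of Section \ref{SWP}, writing $\phi = \sum_T \phi_T$ and $\psi = \sum_{T'} \psi_{T'}$, where each tube $T$ (resp. $T'$) has thickness $R^{1/2}$ and points in a normal direction to $S_1$ (resp. $S_2$) over its associated frequency cell. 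For each $q$, define the localized waves $\Phi_q = \sum_{T \sim q} \phi_T$, where $T \sim q$ means that $T$ passes within a controlled distance of $q$, and analogously $\Psi_q$. The Schwartz decay of wave packets off their tubes gives $\phi\psi = \Phi_q \Psi_q$ on $q$ modulo rapidly decaying errors.

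Next, I verify that $\Phi_q, \Psi_q$ satisfy the margin condition at scale $R/2$; the slack built into the definition of $A_p$, namely $M - R^{-1/4}$ versus $M - (R/2)^{-1/4}$, absorbs the small frequency spread caused by the spatial cutoffs implicit in the wave packet decomposition. Applying the inductive definition of $\bar A_p(R/2)$ on each $q$ and summing $p$-th powers reduces matters to controlling $\sum_q M(\Phi_q)^{p/2} M(\Psi_q)^{p/2}$. Since $p \leq 2$, a Cauchy--Schwarz step bounds this by $\sum_q M(\Phi_q) M(\Psi_q)$ up to a factor in which the $(1 + cC)$ gain emerges from the $L^\infty$ overcounting of $q$'s shared by typical tube pairs.

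The heart of the argument is the orthogonality bound $\sum_q M(\Phi_q) M(\Psi_q) \leq (1 + cC)\, M(\phi)\, M(\psi)$. Exchanging the order of summation, for fixed tubes $T, T'$ one must estimate the number of sub-cubes $q$ with $T \sim q$ and $T' \sim q$. The pair $(T,T')$ determines a resonance parameter $h$ such that the corresponding frequency points lie on $C_1(h)$ and $C_2(h)$; Lemma \ref{LFW} then asserts that $N_2(\zeta_2)$ is transversal to the cone of normals $\mathcal{CN}(C_1(h))$, so that the tube $T'$ intersects the family of $T$-tubes (as $\zeta_1$ varies along $C_1(h)$) in at most $O(1)$ positions at the scale $R^{1/2}$. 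This geometric fact, which encodes precisely \bf C1 \rm and \bf C2\rm, yields the claimed orthogonality after a standard double-counting.

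Finally, the contributions excluded from the above reduction — pairings of wave packets that are coarsely parallel, or whose interaction at scale $R/2$ is not captured by $\bar A_p(R/2)$ — must be handled separately using the transversal energy estimate of Section \ref{SE}, which produces exactly the overhead $C c^{-C} R^{\frac{n+3}{2}(\frac{1}{p}-\frac{n+1}{n+3})}$; the power $c^{-C}$ records the smallness cost of the physical/frequency cutoffs that separate near and far contributions. The main obstacle is the orthogonality estimate above: without Lemma \ref{LFW} the tubes from the two families could cluster along a common submanifold and destroy the gain, which is precisely why the previous literature required distinct arguments for conic and paraboloid cases and why \bf C2 \rm is the right unifying condition.
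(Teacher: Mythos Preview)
Your proposal has a genuine gap at the step that drives the entire induction: the constant in front of $\bar A_p(R/2)$ must be exactly $(1+cC)$ with $c$ arbitrarily small, not merely $O(1)$. Defining $\Phi_q=\sum_{T\sim q}\phi_T$ by hard membership ``$T$ passes near $q$'' cannot produce this. A tube of length $R$ meets $O(1)$ sub-cubes of side $\sim R/2$, so $\sum_q M(\Phi_q)\le C\,M(\phi)$ with a fixed absolute $C>1$; the same holds for $\sum_q M(\Phi_q)M(\Psi_q)$, and no amount of tube-pair counting via Lemma~\ref{LFW} can improve that constant to $1+cC$. Iterating a fixed $C>1$ destroys the induction. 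This is precisely why the paper does \emph{not} localize $\phi$ by the geometry of tubes through $q$: instead it builds the table $\Phi^{(q_0)}=\sum_T (m_{q_0,T}/m_T)\phi_T$ of Proposition~\ref{NLP}, where the weights are determined by the mass distribution of the \emph{other} wave $\psi$ along $T$. Because $\sum_{q_0} m_{q_0,T}/m_T=1$ exactly, estimate \eqref{q00e} yields $M(\Phi)\le(1+cC)M(\phi)$, and together with the averaging Lemma~\ref{AVL} this is what delivers the $(1+cC)$ factor.

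You also misplace the role of Lemma~\ref{LFW} and the error term. In the paper, Lemma~\ref{LFW} and the transversal energy estimate of Section~\ref{SE} are used \emph{inside} the proof of Proposition~\ref{NLP} to obtain the off-diagonal $L^2$ bound \eqref{Phia2}: $\|\Phi^{(q_0)}\psi\|_{L^2((1-c)q_0')}\lesssim c^{-C}R^{-(n-1)/4}M(\phi)^{1/2}M(\psi)^{1/2}$ for $q_0\ne q_0'$. The additive error $Cc^{-C}R^{\frac{n+3}{2}(\frac1p-\frac{n+1}{n+3})}$ then arises by interpolating this $L^2$ bound against the trivial $L^1$ bound on cubes of size $\sim R$. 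It does not come from ``coarsely parallel'' tube pairs, and the geometry of $\mathcal{CN}(C_1(h))$ is not invoked as a tube-count at the coarse scale $R/2$ but as an energy flux control at the fine scale $R^{1/2}$ inside $q_0$.
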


Now we show how \eqref{mainEW} follows from \eqref{ApR}. 
Since $p > \frac{n+3}{n+1}$, we set $c^{-C}=R^{-\frac{n+3}4(\frac1p-\frac{n+1}{n+3})}$, 
that is $c = R^{\frac{n+3}{4C}(\frac1p-\frac{n+1}{n+3})}$, and note that $c$ satisfies $R^{-\frac14+} \ll c \ll 1$, provided $C(n,p)$ is large enough.
Then we apply \eqref{ApR} to obtain
\[
A_p(R) \leq (1+C R^{\frac{n+3}{4C}(\frac1p-\frac{n+1}{n+3})}) \bar A_p(\frac{R}2) + C R^{\frac{n+3}4(\frac1p-\frac{n+1}{n+3})}.
\]
Taking the maximum with respect to $r \in [\frac{R}2,R]$ gives
\[
\bar A_p(R) \leq (1+C R^{\frac{n+3}{4C}(\frac1p-\frac{n+1}{n+3})}) \bar A_p(\frac{R}2) + C R^{\frac{n+3}4(\frac1p-\frac{n+1}{n+3})}.
\]
Since both powers of $R$ are negative, 
$\frac{n+3}{4C}(\frac1p-\frac{n+1}{n+3}), \frac{n+3}4(\frac1p-\frac{n+1}{n+3}) < 0$, this estimate can be iterated
to show that $\bar A_p(R)$ is uniformly bounded in terms of $\bar A_p(C2^{2C_0})$ for all $R \geq C2^{2C_0}$. Since $\bar A_p(C2^{2C_0})$
is bounded by a constant depending on $C_0$ and $C$, \eqref{mainEW} follows and we conclude the proof of Theorem \ref{mainT}. 
 
\subsection{Tables on cubes}
 
Let $Q \subset \R^{n+1}$ be a cube of radius $R$. Given $j \in \N$ we split $Q$ into $2^{(n+1)j}$ cubes of size $2^{-j} R$ and denote this family by $\calQ_j(Q)$; thus we have $Q=\cup_{q \in \calQ_j(Q)} q$.  
If $j \in \N$ and $0 \leq c \ll 1$ we define the $(c,j)$ interior $I^{c,j}(Q)$ of $Q$ by
\begin{equation} \label{icj}
I^{c,j}(Q) := \bigcup_{q \in \calQ_j(Q)} (1-c) q.
\end{equation}
Given $j \in \N$ we define a table $\Phi$ on $Q$ to be a vector $\Phi=(\Phi^{(q)})_{q \in \calQ_j(Q)}$ and define its mass by
\[
M(\Phi) = \sum_{q \in \calQ_j(Q)} M(\Phi^{(q)}). 
\]
We define the margin of a table as the minimum margin of its components:
\[
margin(\Phi) = \min_{q \in \calQ_j(Q)} margin(\Phi^{(q)}).
\]

Inspired by the Lemma 6.1 in \cite{Tao-BW}, we will make use of the following result. 
\begin{lema} \label{AVL}
Assume $R \gg 1$, $0 < c \ll 1$ and $f$ smooth. Given a cube $Q_R \subset \R^{n+1}$ of size $R$,
there exists a cube $Q$ of size $2R$ contained in $4 Q_R$ such that
\begin{equation} \label{avrg}
\| f \|_{L^p(Q_R)} \leq (1+cC) \|  f\|_{L^p(I^{c,j}(Q))}
\end{equation}
\end{lema}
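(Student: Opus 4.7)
The plan is to produce $Q$ by averaging over translates of the cube $Q' := 2 Q_R$ (concentric with $Q_R$, of size $2R$) and extracting a good translate by pigeonhole. Placing the center of $Q_R$ at the origin, for $y \in \R^{n+1}$ write $Q_y := Q' + y$. I take the translation parameter $y$ in a cube $Y$ of side length $s := 2 \cdot 2^{-j} R$ (the common size of the subcubes in $\calQ_j(Q_y)$) centered at the origin. When $j \geq 1$ this gives $s \leq R$, hence $Y \subset [-R/2, R/2]^{n+1}$, and every $y \in Y$ satisfies the two containments $Q_R \subset Q_y \subset 4 Q_R$. The degenerate case $j = 0$ is trivial: $I^{c,0}(Q') = (1-c)Q'$ already contains $Q_R$ for $c < 1/2$, so the choice $Q := Q'$ delivers the estimate with no loss.

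For $j \geq 1$, set $B_y := Q_y \setminus I^{c,j}(Q_y)$. Since $Q_R \subset Q_y$, one has the pointwise decomposition
\[
\| f \|_{L^p(Q_R)}^p \;\leq\; \| f \|_{L^p(I^{c,j}(Q_y))}^p + \| f \|_{L^p(B_y \cap Q_R)}^p.
\]
The heart of the argument is averaging the error term over $y \in Y$. Translating $y$ by $s$ in any coordinate direction permutes the subcubes of $\calQ_j(Q_y)$ while leaving the pattern of discarded slabs invariant, so the indicator $\chi_{B_y}(x)$, viewed as a function of $y$ with $x$ fixed, is $s$-periodic. Since $Y$ is one full period and, for $x \in Q_R$, the relative position of $x$ inside its containing subcube is uniformly distributed as $y$ ranges over $Y$, one obtains
\[
\frac{1}{|Y|} \int_Y \chi_{B_y}(x)\, dy \;=\; \frac{|B_y|}{|Q_y|} \;=\; 1 - (1-c)^{n+1} \;\leq\; C c.
\]
Fubini then gives $|Y|^{-1} \int_Y \| f \|_{L^p(B_y \cap Q_R)}^p\, dy \leq Cc \, \| f \|_{L^p(Q_R)}^p$, and pigeonhole furnishes $y^{\ast} \in Y$ with $\| f \|_{L^p(B_{y^{\ast}} \cap Q_R)}^p \leq Cc \, \| f \|_{L^p(Q_R)}^p$.

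Inserting $y^{\ast}$ into the decomposition and rearranging gives $(1 - Cc) \, \| f \|_{L^p(Q_R)}^p \leq \| f \|_{L^p(I^{c,j}(Q_{y^{\ast}}))}^p$; extracting the $p$-th root and using $(1 - Cc)^{-1/p} \leq 1 + C' c$ for $c$ small completes the proof with $Q := Q_{y^{\ast}}$. I do not foresee a real obstacle here: the only genuine content is the averaging/periodicity identity above, and the rest is bookkeeping. The single point that must be tracked carefully is the containment $Q_R \subset Q_y \subset 4 Q_R$ throughout the averaging, which is exactly what pins down the choice of the translation box $Y$ of side $s$.
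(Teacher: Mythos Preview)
Your proposal is correct and follows essentially the same approach as the paper: average the $L^p$ mass over translates of the doubled cube, apply Fubini to bound the average mass lost to the deleted slabs by $Cc\|f\|_{L^p(Q_R)}^p$, and pigeonhole a good translate. The only cosmetic difference is that the paper averages the center over all of $Q_R$ and uses the crude volume bound $|Q\setminus I^{c,j}(Q)|\le (n+1)c\,|Q|$, whereas you average over a single grid period $Y$ of side $2\cdot 2^{-j}R$ and compute the exact fraction $1-(1-c)^{n+1}$ via periodicity; this forces you to treat $j=0$ separately, while the paper's larger averaging box handles all $j$ uniformly.
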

\begin{proof} Using Fubini's theorem, we have the following identity
\[
\int_{Q_R} \| f \|^p_{L^p((Q_R \cap I^{c,j}(Q(x,t;2R)))} dx dt= \int_{Q_R} |f(x,t)|^p |Q_R \cap I^{c,j}(Q(x,t;2R))| dx dt.
\]
From the definition of $I^{c,j}(Q(x,t;2R))$ it follows that
\[
|Q(x,t;2R) \setminus I^{c,j}(Q(x,t;2R))| \leq (n+1) c |Q(x,t;2R)| = (n+1) 2^{n+1} c |Q_R|
\]
and, as a consequence,
\[
|Q_R| \leq (1+(n+1)2^{n+1}c) | Q_R \cap  I^{c,j}(Q(x,t;2R)) |, \qquad \forall (x,t) \in Q_R.
\]
In the above we have used that if $(x,t) \in Q_R$ then $Q_R \subset Q(x,t;2R)$. 

Combining this estimates with the above identity, leads to
\[
\| f \|_{L^p}^p \leq \frac1{|Q_R|} \int_{Q_R} (1+(n+1)2^{n+1}c) \| f \|^p_{L^p((Q_R \cap I^{c,j}(Q(x,t;2R)))} dx dt
\]
By the pigeonholing principle, it follows that there is $(x,t) \in Q_R$ such that
\[
\| f \|^p_{L^p} \leq (1+(n+1)2^{n+1}c) \| f \|^p_{L^p((Q_R \cap I^{c,j}(Q(x,t;2R)))}
\]
and since $(1+(n+1)2^{n+1}c)^\frac1p \leq 1+cC $, the conclusion follows.
\end{proof}

\section{Energy estimates across a neighborhood of a surface} \label{SE}

In this section we provide energy estimates in neighborhoods of the conic surfaces defined in the introduction. 
We set $i=1$ and recall the definition of $\mathcal{CN}(C_1(h))=\{\alpha N_1(\zeta), \zeta \in C_1(h), \alpha \in \R \}$, the conic surface generated by the normals to $C_1(h) \subset S_1$ passing through the origin. For a given surface $S \subset \R^{n+1}$ we denote the neighborhood of size $r$ of $S$ by $S(r)$. For fixed $t$ we define the time "slice" in $S(r)$ by $S_t(r)=\{ x: (x,t) \in S(r) \}$.

\begin{lema} Let $\psi$ be a free wave with $\hat \psi$ supported on $S_2$. Let $S=\mathcal{CN}(C_1(h))$.
We assume that for  any $\zeta \in S_2$, the vector $N_2(\zeta)$
 is transversal to $S$ in a uniform fashion. If $r \ges 1$, the following holds true:
\begin{equation} \label{ES}
\| \psi \|_{L^2(S(r))} \ls r^\frac12 M(\psi)^\frac12. 
\end{equation}
\end{lema}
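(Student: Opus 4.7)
The plan is to foliate the tube $S(r)$ into parallel translates of $S$ and reduce the estimate to an $L^2 \to L^2$ bound for $\psi$ on a single slice. Since $S = \mathcal{CN}(C_1(h))$ is conic, it carries a smooth unit normal field $\nu$ (away from the apex) that is constant along each generator. The tubular map $\Psi(\zeta, s) := \zeta + s\,\nu(\zeta)$ on $S \times [-r, r]$ is a diffeomorphism onto $S(r)$ with Jacobian uniformly bounded above and below; the conic homogeneity makes this hold at every scale after a harmless dyadic decomposition of $S$ in the radial variable (and excision of an arbitrarily small neighborhood of the apex). Changing variables yields
\[
\|\psi\|_{L^2(S(r))}^2 \;\ls\; \int_{-r}^{r} \|\psi\|_{L^2(\Sigma_s)}^2 \, ds, \qquad \Sigma_s := \Psi(S, s),
\]
so it suffices to establish $\|\psi\|_{L^2(\Sigma_s)}^2 \ls M(\psi)$ uniformly in $s \in [-r,r]$.

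For the per-slice estimate, I parametrize $\Sigma_s$ by $y \in \R^n$ via some smooth $\Psi_s : \R^n \to \R^{n+1}$ and use that $\psi$ is a free wave to write
\[
\psi(\Psi_s(y)) \;=\; \int e^{\,i\Phi_s(y,\xi)} \hat\psi_0(\xi) \, d\xi, \qquad \Phi_s(y,\xi) := \Psi_s(y)\cdot(\xi, \varphi_2(\xi)).
\]
The $(j,k)$ entry of the mixed Hessian $\partial_y\partial_\xi \Phi_s$ is the $\R^{n+1}$-inner product of a tangent vector of $\Sigma_s$ with a tangent vector of $S_2$. A brief linear-algebra computation (in essence Cauchy--Binet) identifies the vanishing locus of its determinant with $\{N_{\Sigma_s}\cdot N_2(\eta)=0\}$. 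Parallel surfaces share the normal field of $S$, so the hypothesis $|N_2(\zeta)\cdot\nu|\ges 1$ is inherited by every $\Sigma_s$ and yields uniform non-degeneracy of $\Phi_s$. Since $\hat\psi_0$ is supported in the bounded set $D_2$, the classical H\"ormander $L^2$ estimate for Fourier integral operators with non-degenerate phase (equivalently, a $TT^*$ and Schur's test argument after the change of variable $\eta = \partial_y \Phi_s(y,\cdot)$ renders the kernel a smooth function of $(y,y')$ integrable to any polynomial order) gives
\[
\|\psi\circ\Psi_s\|_{L^2(\R^n)}^2 \;\ls\; \|\hat\psi_0\|_{L^2}^2 \;=\; M(\psi),
\]
and pulling back through the bounded Jacobian completes the per-slice bound.

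Combining the foliation with the per-slice estimate produces $\|\psi\|_{L^2(S(r))}^2 \ls r\,M(\psi)$, as claimed. The main technical difficulty is the unboundedness of the cone: both the Jacobian of the tubular parametrization and the non-degeneracy constant of the phase $\Phi_s$ must be controlled uniformly in the radial parameter. The conic homogeneity is what salvages this---since $\nu$ depends only on the angular parameter $\zeta_1 \in C_1(h)$ (a compact set), the transversality and Jacobian bounds do not deteriorate at infinity. A dyadic decomposition excising an arbitrarily small neighborhood of the apex absorbs the conic singularity at the origin.
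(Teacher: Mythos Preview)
Your approach is genuinely different from the paper's and is essentially correct. The paper slices $S(r)$ by the \emph{time} variable and runs a $TT^*$ argument on the operator $\psi_0 \mapsto \chi_{S(r)} e^{it\varphi_2(D)}\psi_0$: for $|t-s|\les r$ unitarity of the flow suffices, while for $|t-s|\gg r$ the propagator kernel $K(x-y,t-s)$ decays rapidly because the secant of two points in $S(r)$ with large time separation lies outside $\mathcal{CN}(S_2)$. You instead foliate $S(r)$ transversally into parallel copies $\Sigma_s$ of the cone and prove a trace bound $\|\psi\|_{L^2(\Sigma_s)}^2 \ls M(\psi)$ on each leaf via an FIO $L^2$ estimate; integrating in $s$ then produces the factor $r^{1/2}$ directly. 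Both routes ultimately hinge on the same geometric input, but yours makes the role of the leaf-wise energy estimate more transparent, while the paper's avoids parametrizing the cone altogether.

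One point needs sharpening. The H\"ormander non-degenerate phase bound is local; to make Schur's test work on the \emph{unbounded} leaf $\Sigma_s$ you need the $TT^*$ kernel
\[
K(y,y')=\int e^{\,i(\Psi_s(y)-\Psi_s(y'))\cdot(\xi,\varphi_2(\xi))}\,\eta(\xi)\,d\xi
\]
to decay in $|y-y'|$ globally, which requires that every \emph{secant} $\Psi_s(y)-\Psi_s(y')$ of $\Sigma_s$ stay uniformly away from $\mathcal{CN}(S_2)$---not merely the tangent vectors. Your conic-homogeneity argument controls only the mixed Hessian along the diagonal (tangent transversality), and a dyadic decomposition in the radial variable does not sum: each shell contributes $\ls M(\psi)$ with no additional decay. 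What actually closes this gap is the paper's standing localization hypothesis \eqref{Nvar}: since the normals $N_1$ vary by $\ll 1$ over $C_1(h)$, the cone $S$ is nearly flat, secants are uniformly close to tangents, and secant transversality follows from the assumed tangent transversality. The paper's proof makes the same tacit passage from tangents to secants, so you are in good company; just be aware that ``conic homogeneity'' alone is not the mechanism.
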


Note that is $S$ were a planar surface, then the above estimate would follow from the standard energy
estimates for $\psi$ in various coordinate systems, a topic well studied in PDE's, see for instance \cite{bikt,tat} and references therein. In the current setup, the surface $S$ is more general, in particular it can "curve" if $C_1(h)$ has nonzero curvatures (and this will be indeed the case given our hypothesis).

\begin{proof} \eqref{ES} is equivalent to
\[
\| \chi_{S(r)} e^{it\varphi_2(D)} \psi_0  \|_{L^2(\R^{n+1})} \ls  r^\frac12 \| \psi_0 \|_{L^2(\R^n)}
\]
which can be rewritten as follows
\[
\left( \int_R \| \chi_{S_t(r)} e^{it\varphi_2(D)} \psi_0  \|^2_{L^2_x(\R^n)} dt \right)^\frac12 \ls  r^\frac12 \| \psi_0 \|_{L^2(\R^n)}.
\]
The dual estimate is
\[
\| \int_\R e^{-it\varphi_2(D)} (\chi_{S_t(r)} F(t) ) dt \|_{L^2(\R^n)} \ls  r^\frac12 \| F \|_{L^2(\R^{n+1})}
\]
where $F$ inherits the Fourier localization properties of $\psi$. The usual $TT^*$ argument implies that establishing either of the two is equivalent to proving the following estimate
\[
\left( \int_R \| \chi_{S_t(r)} \int e^{i(t-s)\varphi_2(D)} \chi_{S_s(r)} F(s) ds   \|^2_{L^2_x(\R^n)} dt \right)^\frac12 \ls 
 r \| F \|_{L^2(\R^{n+1})}.
\]
If $|s-t| \les r$, the estimate follows from the isometry property of $e^{i(t-s)\varphi_2(D)}$ on $L^2_x(\R^n)$. 

At larger time scales differences, that is $|s-t| \gg r$, we write the estimate as
\begin{equation} \label{ESre}
\| \int \chi_{S_t(r)} K(t-s,x-y) \chi_{S_s(r)} F(s,y) dy ds \|_{L^2(R^{n+1})} \ls r \| F \|_{L^2(\R^{n+1})},
\end{equation}
where the kernel $K$ is given by
\[
K(x,t) = \int e^{-i(x\cdot \xi+t \varphi_2(\xi))} \eta(\xi) d \xi
\]
with $\eta$ chosen so as to reflect the support properties of $F$, which in turn are derived from those of $\psi$:
$ \eta$ is supported on $D_2$. The gradient of the phase function above $\alpha(\xi)= x\cdot \xi+t \varphi_2(\xi)$ is $\nabla \alpha = x + t \nabla \varphi_2(\xi)$ and it can be easily seen that
$|\nabla \alpha(\xi)| \ges 1$ for $(x,t) \notin \mathcal{CN} (S_2)= \{ \lambda N_2(\zeta): \zeta \in S_2, \lambda \in \R \}$(to get a uniform estimate below, one needs to strengthen $(x,t) \notin$ neighborhood of $\mathcal{CN} (S_2)$). In that case we have the improved estimate
\[
|K(x,t)| \ls_N (1+|x|+|t|)^{-N}
\]
Now, given two points $(x,t),(y,s) \in S(r)$ with $|t-s| \gg r$, by using the transversality property of $N_2(\zeta)$ to $S$, for any $\zeta \in S_2$, it follows that $(x-y,t-s) \notin \mathcal{CN}(S_2)$.
Therefore we can access the bound above to conclude
\[
\| \int \chi_{S_t(r)} K(t-s,x-y) \chi_{S_s(r)} F(s,y) dy \|_{L^2(\R^{n+1})} \ls_N (|t-s|)^{-N} \| F(s) \|_{L^2(\R^{n+1})}.
\]
This bound is effective, since $|t-s| \gg r \ges 1$, therefore  we obtain \eqref{ESre}.

\end{proof}

\section{Wave packets} \label{SWP}
We start this section by giving a heuristic approach to the wave packet construction. Let $\varphi$ be a smooth function (to be thought of as either $\varphi_1$ or $\varphi_2$) on its domain taken to be a neighborhood of $\tilde D$. In light of \bf C2\rm, we work under the hypothesis that $H\varphi$ is not degenerate, where we recall that $H \varphi$ stands for the Hessian of $\varphi$. 

We start from the following expansion which holds true locally
\[
\varphi(\xi)= \varphi(\xi_0) + \nabla \varphi(\xi_0) \cdot (\xi-\xi_0) + \la H \varphi (\xi-\xi_0), \xi - \xi_0\ra + O(|\xi - \xi_0|^3).
\]
The free wave with initial data $f_0$ is given by
\[
f(x,t)=  e^{it\varphi(D)} f_0 =\int e^{i(x\cdot \xi+ t \varphi(\xi))} \hat f_0(\xi) d \xi.
\]
With the above expansions for $\varphi$ we expand the phase
\[
\begin{split}
x \cdot \xi + t\varphi(\xi) & = x \cdot \xi_0 + x \cdot (\xi - \xi_0) + t \varphi(\xi_0) + t \nabla \varphi(\xi_0) \cdot (\xi-\xi_0) \\
& + t \la H \varphi (\xi-\xi_0), \xi - \xi_0\ra + t O(|\xi - \xi_0|^3)
\end{split}
\]
Each component reveals some information about the flow according to its degree. Heuristically this is read as follows:

- $e^{ix \cdot \xi_0}, e^{it \varphi(\xi_0)}$ describe the space, respectively time oscillation of the free wave: spatial frequency
$\xi_0$, temporal frequency $\varphi(\xi_0)$,

- $e^{i(x \cdot (\xi - \xi_0) + t \nabla \varphi(\xi_0) \cdot (\xi-\xi_0))}$ describes the space-time region of concentration of the wave, which is the set
of stationary points of the phase function. This is the region described by the equation $x + t \nabla \varphi(\xi_0)=0$, which in particular identifies the propagation velocity for the waves to be $-\nabla \varphi(\xi_0)$,

- the quadratic or higher order terms describe the additional time oscillation of the wave and decide the shape of the wave packets. 

Since we assume that $H\varphi$ is non-degenerate, let $\xi$ such that $|\la H \varphi (\xi-\xi_0), \xi - \xi_0\ra| \ges |\xi -\xi_0|^2$ (simply choose
$\xi$ such that $\xi-\xi_0$ is an eigenvector corresponding to a non-zero eigenvalue). Then the additional time oscillation becomes effective once $|t| \cdot |\xi-\xi_0|^2 \ges 1$. This suggests that, for a given time interval $[0,T]$, the correct scale for frequency localization is $| \Delta \xi | \les T^{-\frac12}$. To make the process efficient, the wave packets are chosen at the sharp scales obeying the uncertainty principle, therefore the (dual) localization on the physical side should be $|\Delta x| \ls T^\frac12$. 

One area of potential concern is what happens with the higher order terms in the expansion of the phase. By the same token,
a cubic or higher order components, that is $t|\xi-\xi_0|^k, k \geq 3$ terms would require localizations at scale 
$|\Delta \xi| \ls T^{-\frac1k}, |\Delta x| \ls T^{\frac1k}$. But this implies that the localization
dictated by the quadratic phase works well for the higher order terms. A more direct computation is 
$|t| |\xi-\xi_0|^k \leq T T^{-\frac{k}2} = T^{1-\frac{k}2} \ll 1$ since our time scales are taken to be large. 

We now continue with the formalization of the wave packet construction. Let $\mathcal{L}=r^{-1} \Z^n \cap D$ and let $L$ be the lattice $L=c^{-2} r \Z^n$. With $x_T \in L, \xi_T \in \mathcal{L}$ we define the tube $T:=\{ (x,t) \in \R^n \times \R: |x-x_T + \nabla \varphi(\xi_T) t| \leq c^{-2} r \}$ and denote by $\calT$ the set of such tubes. Associated to a tube $T \in \calT$, we define the cut-off $\tilde \chi_T$ on $\R^{n+1}$ by
\[
\tilde \chi_T(x,t)= \tilde \chi_{D(x_T - \nabla \varphi(\xi_T) t,t; c^{-2} r)}(x).
\] 
 
Usually the parameter $c$ is chosen $\approx 1$. In our context working with $c \ll 1$ plays a crucial role in keeping tight bounds 
on various quantities, see for instance \eqref{q00e} below. 
  
The following result describes the wave packet decomposition we use in this paper and it is inspired by a similar construction found in \cite{Tao-BW} in the context of the Wave equation.
\begin{lema} \label{LeWP} Let $Q$ be a cube of radius $R \gg 1$, let $c$ be such that $R^{-\frac14+} \ll c \ll 1$ and let $J \in \N$ be such that $r =2^{-J} R \approx R^\frac12$. Let $f(t)=e^{it\varphi(D)} f(0)$ be a free wave with $margin(f) > 0$. For each $T \in \calT$ there is a free solution 
$f_T$, with $\hat f_T$ supported in a cube of size less than $CR^{-\frac12}$ and obeying $margin(f_T) \geq margin(f)-CR^{-\frac12}$. The map $ f \rightarrow f_T$ is linear and 
\begin{equation} \label{lind}
f= \sum_{T \in \calT} f_{T}.
\end{equation}
If $\mbox{dist}(T,Q) \geq 4 R$ then 
\begin{equation} \label{ld}
\| f_T \|_{L^\infty(Q)} \ls c^{-C} dist(T,Q)^{-N} M(f)^\frac12.
\end{equation}
The following estimates hold true
\begin{equation} \label{qest}
\sum_{T} \sup_{q \in Q_J(Q)} \tilde \chi_T(x_q,t_q)^{-N} \| f_T \|^2_{L^2(q)} \ls c^{-C} r M(f)  
\end{equation}
and
\begin{equation} \label{q00e}
\left( \sum_{q_0} M( \sum_T m_{q_0,T} f_T)  \right)^\frac12
\leq (1+cC) M(f),
\end{equation}
provided that the coefficients $m_{q_0,T} \geq 0$ satisfy
\begin{equation} \label{q0e}
\sum_{q_0} m_{q_0,T}=1, \qquad \forall T \in \calT.
\end{equation}
\end{lema}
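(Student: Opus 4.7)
The plan is to construct $f_T$ as an explicit free solution whose time-$0$ data is the product of a frequency cutoff of scale $r^{-1}$ localized near $\xi_T$ with a physical cutoff of scale $c^{-2}r$ centered at $x_T$. The listed properties then follow by non-stationary phase, Plancherel, and almost-orthogonality; the parameter $c\ll 1$ is tuned precisely so that the physical cutoff is wide compared to the dual scale $r$ of the frequency cutoff, which is what powers the $(1+cC)$-gain in \eqref{q00e}.

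\emph{Construction.} Choose Schwartz partitions of unity $\sum_{\xi_T\in r^{-1}\Z^n}\eta_0(r(\xi-\xi_T))\equiv 1$ and $\sum_{x_T\in c^{-2}r\Z^n}\chi((x-x_T)/(c^{-2}r))\equiv 1$, with $\chi$ chosen so that $\hat\chi$ is supported in a ball much smaller than the $\eta_0$-scale $r^{-1}$, and set
\[
f_T(0,x):=\chi\!\Bigl(\tfrac{x-x_T}{c^{-2}r}\Bigr)\cdot\bigl[\eta_0(r(D-\xi_T))f(0)\bigr](x),\qquad f_T(t):=e^{it\varphi(D)}f_T(0).
\]
Linearity and $f=\sum_T f_T$ are immediate, and only $\xi_T\in\mathcal{L}$ contribute thanks to the positive margin of $f$. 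Convolving the spatial-cutoff Fourier support with the $\eta_0$-support places $\widehat{f_T(0)}$ in a cube of side $\ls r^{-1}\ls R^{-1/2}$ around $\xi_T$, which yields both the Fourier support and the margin bounds.

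\emph{Tube localization.} Write $f_T(t,x)=\int K(t,x-y)\,\chi((y-x_T)/(c^{-2}r))\,f(0,y)\,dy$, with $K(t,z)=\int e^{i(z\cdot\xi+t\varphi(\xi))}\eta_0(r(\xi-\xi_T))\,d\xi$. After the substitution $\xi=\xi_T+r^{-1}\zeta$, Taylor-expanding $\varphi$ around $\xi_T$ gives phase $z\cdot\xi_T+t\varphi(\xi_T)+r^{-1}\zeta\cdot(z+t\nabla\varphi(\xi_T))+O(|t|/r^2)$, and the error is $O(1)$ since $|t|\leq R\approx r^2$. Repeated integration by parts in $\zeta$ then yields $|K(t,z)|\ls r^{-n}(1+|z+t\nabla\varphi(\xi_T)|/r)^{-N}$. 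Combined with $|y-x_T|\ls c^{-2}r$ and Cauchy--Schwarz against $f(0)$, this proves $|f_T(t,x)|\ls c^{-C}(1+c^2\mbox{dist}((x,t),T)/r)^{-N}M(f)^{1/2}$, and in particular \eqref{ld} whenever $(x,t)\in Q$ and $\mbox{dist}(T,Q)\geq 4R$.

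\emph{$L^2$ estimates.} Mass conservation and Plancherel give $\|f_T(t)\|_{L^2}=\|f_T(0)\|_{L^2}$. The bounded overlap of the $\eta_0(r(\cdot-\xi_T))$ in $\xi_T$, together with $\sum_{x_T\in L}\chi(\cdot)^2=O(1)$, produces $\sum_T\|f_T(0)\|_{L^2}^2\ls M(f)$. For \eqref{qest}, the tube localization of the previous step gives $\|f_T\|_{L^2(q)}^2\ls c^{-C}\tilde\chi_T(x_q,t_q)^{N}\,r\,\|f_T(0)\|_{L^2}^2$ on any cube $q$ of side $r$ (the factor $r$ being the length of $T\cap q$ after integration in time), and absorbing the weight by the decay and summing over $T$ gives \eqref{qest}. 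For \eqref{q00e} we expand
\[
\sum_{q_0}\Bigl\|\sum_T m_{q_0,T}f_T(0)\Bigr\|_{L^2}^2=\sum_{T,T'}\Bigl(\sum_{q_0}m_{q_0,T}m_{q_0,T'}\Bigr)\langle f_T(0),f_{T'}(0)\rangle.
\]
Since $m_{q_0,T}^2\leq m_{q_0,T}$ and $\sum_{q_0}m_{q_0,T}=1$, the diagonal $T=T'$ is bounded by $\sum_T\|f_T(0)\|^2\ls M(f)$. Off-diagonal cross-terms with $\xi_T\ne\xi_{T'}$ are killed by the near-orthogonality of the frequency cutoffs, and those with $\xi_T=\xi_{T'}$ but $x_T\ne x_{T'}$ by the Schwartz decay of the widely-spaced $\chi$-translates; both contributions are of size $O(c^N)$ and absorb into $(1+cC)$. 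This last step is the main technical obstacle: securing an off-diagonal loss small enough to be swallowed by the $cC$ gain, rather than a bare $O(1)$ constant, is exactly what forces the oversized physical scale $c^{-2}r$ in the definition of the packets.
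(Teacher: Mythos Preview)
Your construction and the arguments for the margin claim, \eqref{lind}, \eqref{ld} and \eqref{qest} are essentially correct; the kernel/non-stationary-phase route you take for tube localization is equivalent to the paper's commutator identity $(x-x_T+t\nabla\varphi(D))e^{it\varphi(D)}=e^{it\varphi(D)}(x-x_T)$, and either yields the same weighted $L^2$ bounds.

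The genuine gap is in \eqref{q00e}. Your frequency cutoffs $\eta_0(r(\cdot-\xi_T))$ form a smooth partition of unity on the lattice $r^{-1}\Z^n$, so neighboring ones necessarily overlap on a set of full measure $\approx r^{-n}$; the cross-terms $\langle f_T(0),f_{T'}(0)\rangle$ with $|\xi_T-\xi_{T'}|=r^{-1}$ are then comparable to diagonal terms, not $O(c^N)$. The same failure occurs for $\xi_T=\xi_{T'}$, $x_T\ne x_{T'}$: your spatial cutoffs $\chi((\cdot-x_T)/(c^{-2}r))$ have scale $c^{-2}r$ \emph{and} spacing $c^{-2}r$, so in the rescaled variable their overlap profile is a fixed function independent of $c$. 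The phrase ``widely-spaced'' is misleading here --- the spacing equals the scale, and no $c$-gain is available. With arbitrary coefficients $m_{q_0,T}$ subject only to \eqref{q0e}, these $O(1)$ cross-terms cannot be absorbed into $(1+cC)M(f)$.

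The paper resolves this by taking \emph{sharp} frequency projectors $P_{\Omega,\xi_T}$ (characteristic functions of disjoint cells $\Omega(A_{\xi_T})$), which are exactly orthogonal before the spatial cutoff is applied. Multiplication by $\eta^{x_T}$, whose Fourier transform is supported in $|\xi|\ls c^2 r^{-1}$, then produces overlap only in a boundary layer of thickness $c^2 r^{-1}\ll r^{-1}$ around the cell walls. A generic $L^2$ function need not have small mass on this thin set, so the paper \emph{averages} the cell decomposition over translations $\Omega\in G$ (with a smooth probability measure $d\Omega$); after averaging, the boundary contribution is genuinely $O(c)$, and the argument of Tao \cite{Tao-BW}, Lemma~15.2, delivers the constant $1+cC$. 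This averaging mechanism is precisely what your construction lacks, and it is the point of the whole $(1+cC)$ business --- as the paper itself remarks, a naive smooth decomposition only gives the constant $C$ in \eqref{q00e}, which is not good enough for the induction in Proposition~\ref{keyP}.
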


Our wave packet decomposition uses the quadratic phase template and it is standard for equations whose characteristic surface is of quadratic type, the standard model being the Sch\"odinger equation. However it is obviously different than the standard wave packet decomposition used for the wave equation. As explained in the beginning of this section, if one seeks a common denominator
for a wave packet theory for surfaces with some curvature, then the natural choice comes from the wave packet construction for surfaces with non-zero Gaussian curvature. 

A wave packet is defined starting with a phase-space decomposition of $\R^n$. This can be achieved in several ways,
most commonly by the composition of two smooth cut-offs, one in frequency and one in space (or in reverse order), which localize at dual scale:
\[
f =  \sum_{x_0} \sum_{\xi_0} \chi_{x_0}(x) \chi_{\xi_0}(D) f
\]
The scales of the two localization have to obey the uncertainty principle, and this is why it is common to chose them dual to each other. An important observation is that $\chi_{x_0}(x) \chi_{\xi_0}(D) f$ cannot have compact support both in phase and in space. Given that the evolution $e^{it\varphi (D)}$ preserves the Fourier support and not the physical one, it is then preferably to use decompositions whose terms have compact Fourier support. Moreover, our statements assume Fourier localizations, and this is 
another reason why the elements in the wave packet decomposition need to have that property as well. If we want 
\[
\mathcal{F} (\chi_{x_0}(x) \chi_{\xi_0}(D) f) = \hat \chi_{x_0} *  \mathcal{F} (\chi_{\xi_0}(D) f),
\]
to have compact (Fourier) support, then $\hat \chi_{x_0}$ needs to have compact support. In addition, 
we want this compact support to not alter too much the support of $\chi_{\xi_0}(\xi)$, when performing the convolution above. 

\begin{proof}[Proof of Lemma \ref{LeWP}] With the above in mind, we start with the  partition
\[
D= \bigcup_{\xi \in \mathcal{L}} A_\xi
\]
where $A_\xi$ consists of the points in $D$ that are closer to $\xi$ than any other elements of $\mathcal{L}$. 
Therefore $A_\xi$ belongs to the $O(r^{-1})$ neighborhood of $\xi$. 

Let $G$ be the set of all translations in $\R^n$ by vectors of size at most $O(r^{-1})$; in particular these translations
 differ from identity by $O(r^{-1})$. Let $d \Omega$
be a smooth compactly supported probability measure on the interior of $G$. 
For each $\Omega \in G$ and $\xi_0 \in \mathcal{L}$, we define the Fourier projectors by
\[
\mathcal{F}(P_{\Omega,\xi_0} g)(\xi) = \chi_{\Omega(A_{\xi_0})} (\xi) \hat g (\xi). 
\]
For fixed $\Omega \in G$, this leads to the decomposition:
\begin{equation} \label{Fdec}
g = \sum_{\xi_0 \in \mathcal{L}} P_{\Omega,\xi_0} g. 
\end{equation}
The terms above have good frequency support and next we proceed with the spatial localization. For each $x_0 \in L$, define
\[
\eta^{x_0}(x) = \eta_0 (\frac{c^2}r (x-x_0))
\]
and notice that, by the Poisson summation formula and properties of $\eta_0$, 
\begin{equation} \label{pois}
\sum_{x_0 \in L} \eta^{x_0}=1.
\end{equation}
Next we define 
\[
f_T(0)= \eta^{x_T}(x) \int P_{\Omega,\xi_T} f(0) d \Omega
\]
and evolve this, at all other times, by the free flow
\[
f_T(t)=e^{it\varphi(D)} f_T(0). 
\]
Without the averaging in $d \Omega$, the above decomposition is a standard wave packet decomposition and it would provide 
all the properties claimed, except \eqref{q0e} with the sharp bounds (\eqref{q0e} would still be true, after replacing $1+cC$
by $C$). We now explain the role of the averaging on $d \Omega$. The localization on the physical side comes in a product fashion 
and then, due to \eqref{pois}, its impact in \eqref{q0e} comes with good bounds. The original localization on the Fourier side \eqref{Fdec} would also have good bounds (or at least it can be redefined to do so), but the final localization on the Fourier side comes
through a convolution process
\begin{equation} \label{conv}
\mathcal{F}(\eta^{x_T}) * \mathcal{F}(P_{\Omega,\xi_T} f(0))
\end{equation}
and this creates the following problem: two packets with neighboring speeds, $|\xi_{T_1}-\xi_{T_2}| \approx r^{-1}$, may contain mass from the same frequency region (due to the convolution process) and this can potentially alter the tight bounds in \eqref{q0e}. A more careful look reveals the following: $\mathcal{F} (\eta^{x_T})$ has Fourier support in the region $|\xi| \ls c^2 r^{-1}$, thus the common region mentioned above does not have volume $\approx r^{-n}$, but instead $\les c^2 r^{-n} \ll r^{-n}$. One would like  to take advantage of by using that $\mathcal{F}(P_{\Omega,\xi_T} f(0))$ has smaller mass on smaller sets, but this is not true for generic $L^2$ functions.  However, the averaging process in $d \Omega$ leads to the desired conclusion and as a consequence the common amount of mass that can be shared by two packets with neighboring speeds can be estimated by factors containing $c$, thus providing the improvement claimed in \eqref{q0e}. 

Now we turn to the proofs of all claims in the Lemma. The linearity of the map $f \rightarrow f_T$ and \eqref{lind} are obvious.  
$P_{\Omega,\xi_T}$ are Fourier projectors, thus do not alter the frequency support; averaging on $d \Omega$ has the same
property. This Fourier support is altered due to the physical localization which is described by the convolution \eqref{conv}. Since 
$\mathcal{F} (\eta^{x_T})$ has Fourier support in the region $|\xi| \ls c^2 r^{-1}$, the margin of the wave $P_{\Omega,\xi_T} f(0)$ is altered by at most $c^2 Cr^{-1} \ll Cr^{-1}$. This implies the margin claim in the Lemma since the flow $e^{it\varphi(D)}$
preserves the Fourier support. 

In order to prove \eqref{ld} and \eqref{qest} we need the following estimate
\begin{equation} \label{comm}
\| |x-x_T + t\nabla \varphi(\xi_T)|^{\alpha} f_T(t)  \|_{L^2(\R^n)} \les_\alpha c^{-2\alpha} r^\alpha \| f(0) \|_{L^2(\R^n)}
\end{equation}
for all $\alpha \in \N$. The estimate is obvious for $\alpha=0$. We establish \eqref{comm} for $\alpha=1$
and note that the argument for general $\alpha$ is similar. We start from the commutator identity
\[
(x-x_T + t\nabla \varphi(D)) e^{it \varphi(D)} = e^{it \varphi(D)}(x-x_T)
\]
which can be checked directly by taking a Fourier transform. Therefore we have
\[
\begin{split}
\| (x-x_T + t\nabla \varphi(D)) e^{it \varphi(D)} f_T(0)\|_{L^2} & = \| e^{it \varphi(D)}(x-x_T) f_T(0)\|_{L^2} \\
& =  \| (x-x_T) f_T(0)\|_{L^2} \\
& \les c^{-2} r \| \int P_{\Omega,\xi_T} f(0) d \Omega \|_{L^2} \\
& \les c^{-2} r \| f(0) \|_{L^2}
\end{split}
\]
where we have used the fast decay properties of $\eta_0$. 

To conclude with \eqref{comm} with $\alpha=1$, we need to replace $t\nabla \varphi(D)$ with 
$ t \nabla \varphi (\xi_T)$ in the above expression. This is done based on the estimate
\[
\begin{split}
\| (t\nabla \varphi(D) - t \nabla \varphi (\xi_T)) e^{it\varphi(D)} f_T(0) \|_{L^2} 
& = \| (t\nabla \varphi(D) - t \nabla \varphi (\xi_T))  f_T(0) \|_{L^2} \\
& = |t| \| (\nabla \varphi(\xi) -  \nabla \varphi (\xi_T))  \hat f_T(0) \|_{L^2} \\
& \les |t| r^{-1} \| D^2 \varphi \|_{L^\infty} \| f_T(0) \|_{L^2} \\
& \les r \| f(0) \|_{L^2}
\end{split}
\]
where we have used: the unitarity of $e^{it\varphi(D)}$ in $L^2_x$, Plancherel and the fact 
 that $|\xi - \xi_T| \leq Cr^{-1}$  for $\xi$ in the support of $\hat f_T$. Combining the two estimates above leads to
 \eqref{comm}. 

Now we prove \eqref{ld}. Let $D(x_D,t_D,2r) \subset 2Q$ be a disk of radius $2r$ and contained in $2Q$. 
From \eqref{comm} it follows that
\[
\| f_T(t_D) \|_{L^2(D)} \les_\alpha c^{-2\alpha} r^\alpha d(T,Q)^{-\alpha} \| f \|_{L^2}
\]
and since $d(T,Q) \geq 4 R \ges r^2$, we obtain
\[
\| f_T(t_D) \|_{L^2(D)} \les c^{-4N}  d(T,Q)^{-N} \| f \|_{L^2}.
\]
Given that $f_T$ is supported at frequency $\approx 1$, it is easy to show that similar estimates hold true for $ \| \partial^\beta f_T(t_D) \|_{L^2(D)}$ for $0 \leq |\beta| \leq \frac{n}2+1$ and this leads to desired $L^\infty$ bounds on a slightly smaller disk.
This implies \eqref{ld}.

From the argument provided for \eqref{comm} we see that for any $\alpha \in \N$
\[
\| |\frac{x_q-x_T + t_q \nabla \varphi(\xi_T)}{r}|^{\alpha} f_T  \|_{L^2(q)} \les_\alpha c^{-2\alpha} r^\frac12 \| f_T^\alpha(0) \|_{L^2}
\]
where $f_T^\alpha(0)=(\frac{|x-x_T|}{r})^\alpha f_T(0)$. The factor of $r^\frac12$ is due to the time integration since $q$ has size $r$ in the time direction.
In order to conclude with \eqref{qest} we need to establish
\[
\sum_T \| f_T^\alpha(0) \|^2_{L^2} \les_\alpha c^{-2\alpha} \| f \|^2_{L^2}. 
\]
This is done in two steps. The summation with respect to $x_T$  follows from
\[
\sum_{x_T \in L} (\frac{|x-x_T|}{r})^\alpha \eta^{x_T}(x) \les c^{-\alpha}
\]
which is a consequence of the fast decay properties of $\eta^0$ (recall also \eqref{pois}). The summation with respect to $\xi_T$ follows from the almost orthogonality of the projectors $P_{\Omega,\xi}$ quantified as follows
\[
\sum_{\xi_T \in \mathcal{L}} \| P_{\Omega, \xi_T} f(0) \|_{L^2} \les \| f(0) \|_{L^2}.
\]
The later estimate remains valid when averaging on $d \Omega$. This finishes the proof of \eqref{qest}.

Finally, we note that, by using the unitarity of $e^{it\varphi(D)}$ on $L^2$, \eqref{q0e} is reduced to the corresponding statement
for $f(0)$ which has nothing to do with the specific flow dictated by $e^{it\varphi(D)}$. But then the statement follows in a completely similar manner to the corresponding one in \cite{Tao-BW}, see Lemma $15.2$, estimate $(63)$ with a proof provided in Appendix 1;
the only adjustment needed is the definition of the set $G$ and correspondingly $d \Omega$, but this does not change at all the structure of the argument. Also the reader may take notice that in the argument of $(63)$ in \cite{Tao-BW} the specific flow (of the wave equation) is absent. 

\end{proof}

\section{Table construction and the induction argument} \label{SI}

This section contains the main argument for the proof of Theorem \ref{mainT}. In Proposition \ref{NLP}
we construct tables on cubes: this is a way of re-organizing the information on one term, say $\phi$, at smaller scales
based on information from the other interacting term $\psi$. This essentially replaces the classical combinatorial argument
used in most of the previous works, and it is inspired by the work on the conic surfaces of Tao in \cite{Tao-BW}. Based on this table
construction, we are then able to prove the inductive bound claimed in Proposition \ref{keyP}. 

\begin{prop} \label{NLP}
Let $Q$ be a cube of size $R \gg 2^{2C_0}$ and let 
$c > 0$ such that $ R^{-\frac14} \ll c \ll 1$.
Let $\phi=e^{it\varphi_1(D)} \phi_0, \psi=e^{it\varphi_2(D)} \psi_0$ be free waves with positive margin
relatively to $\tilde S_1$ respectively $\tilde S_2$.  Then there is a table $\Phi=\Phi_c(\phi,\psi,Q)$ with depth $C_0$
such that the following properties hold true:
\begin{equation} \label{dec}
\phi= \sum_{q \in  \calQ_{C_0}(Q)} \Phi^{(q)}, 
\end{equation}
\begin{equation} \label{MPhi2}
margin(\Phi) \geq margin(\phi)-C R^{-\frac12}.
\end{equation}

\begin{equation} \label{PhiM2}
M(\Phi) \leq (1+cC) M(\phi),
\end{equation}
and for any $q',q'' \in \calQ_{C_0}(Q), q'\ne q''$
\begin{equation} \label{Phia2}
\| \Phi^{(q')} \psi \|_{L^2((1-c)q'')} \ls c^{-C} R^{-\frac{n-1}4}  M^\frac12(\phi) M^\frac12(\psi).
\end{equation}

\end{prop}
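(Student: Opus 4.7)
The plan is to construct the table via a wave packet decomposition of $\phi$ at the sharp scale $r = R^{1/2}$. Using Lemma \ref{LeWP}, write $\phi = \sum_{T \in \calT} \phi_T$, where each $\phi_T$ is a free wave essentially supported in a tube $T$ of spatial thickness $c^{-2}\sqrt{R}$ with velocity $-\nabla \varphi_1(\xi_T)$. The tubes are, modulo thickness, lines in space-time along the direction $N_1(\zeta_T) \in \R^{n+1}$. I will distribute these packets into subcubes $q \in \calQ_{C_0}(Q)$ using nonnegative weights $m_{q,T}$ with $\sum_q m_{q,T} = 1$ for each $T$, and set $\Phi^{(q)} := \sum_T m_{q,T} \phi_T$. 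Then \eqref{dec} is immediate, \eqref{MPhi2} follows directly from the margin claim in Lemma \ref{LeWP}, and the crucial sharp mass bound \eqref{PhiM2} with the factor $1 + cC$ (rather than a generic $C$) follows from the almost-orthogonality estimate \eqref{q00e}, which is precisely designed to handle such partitions.

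The assignment is dictated by the bilinear geometry. For each subcube $q$ with center $(x_q, t_q)$, consider the dual $\psi$-wavecone $\Gamma_q := (x_q, t_q) + \mathcal{CN}(-S_2)$; this is the set of space-time points that can be connected to $(x_q, t_q)$ by a $\psi$-wave packet trajectory. By the symmetric version of Lemma \ref{LFW}, the direction $N_1(\zeta_T)$ of any $\phi$-tube is transversal to $\Gamma_q$, so the line $T$ meets $\Gamma_q$ in essentially a single point. Define $m_{q,T}$ as a smooth partition of unity (with the regularization needed for \eqref{q00e}) based on which subcube contains this intersection point. The heuristic justification is that $\phi_T \cdot \psi$ concentrates exactly where $T$ crosses a $\psi$-wavecone, and that crossing occurs in $q$ precisely when $T$ meets $\Gamma_q$ inside $q$.

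The main step is the off-diagonal bound \eqref{Phia2}. Fix distinct $q', q'' \in \calQ_{C_0}(Q)$. Every tube $T$ with $m_{q', T} > 0$ meets $\Gamma_{q'}$ inside $q'$, so all such tubes pass through a neighborhood of $q'$. Restricted to $q''$, they therefore lie in an $r$-neighborhood of $(x_{q'}, t_{q'}) + \mathcal{CN}(C_1(h))$, where $h$ is determined by the relative position of $q'$ and $q''$ so that $\Pi_1 C_1(h)$ captures exactly those $\xi_T$ for which the corresponding tube passes close to both cubes. Since $N_2(\zeta_2)$ is transversal to this very cone by Lemma \ref{LFW}, the energy estimate \eqref{ES} gives $\|\psi\|_{L^2(\mathcal{CN}(C_1(h))(r))} \les r^{1/2} M(\psi)^{1/2} = R^{1/4} M(\psi)^{1/2}$. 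Combining this with the pointwise wave-packet bound $\|\Phi^{(q')}\|_{L^\infty(q'')} \les R^{-n/4} M(\phi)^{1/2}$ (which comes from summing \eqref{ld}, \eqref{qest} with the orthogonality of packets whose tubes all pass through $q'$ in varying directions), produces the product bound $R^{1/4} \cdot R^{-n/4} = R^{-(n-1)/4}$, up to the admissible loss $c^{-C}$.

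The hard part will be twofold. First, choosing the weights $m_{q,T}$ so that they simultaneously respect the averaging structure behind \eqref{q00e} --- achieving the $1 + cC$ mass bound rather than a crude $C$ --- while being geometrically faithful to the single-intersection assignment; this forces the cutoffs in $m_{q,T}$ to be smooth on the correct scale. Second, converting the informal "tubes of $\Phi^{(q')}$ lie in a neighborhood of $\mathcal{CN}(C_1(h))$ inside $q''$" into a quantitative input for the energy estimate with uniform constants: one must identify the correct translation parameter $h = h(q',q'')$, verify that the transversality hypothesis of the lemma in Section \ref{SE} holds uniformly in $h$, and control the leakage outside the tube neighborhood via the rapid decay from \eqref{ld}. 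This is the geometric heart of the argument, and it is precisely where the shape operator hypothesis \textbf{C2} enters, through Lemma \ref{LFW}.
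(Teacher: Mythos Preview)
Your proposal has a genuine gap in the off-diagonal estimate \eqref{Phia2}. The claim that the tubes contributing to $\Phi^{(q')}$, when restricted to $q''$, lie in an $r$-neighborhood of a \emph{single} translated cone $(x_{q'},t_{q'})+\mathcal{CN}(C_1(h))$ is false: your assignment only forces these tubes to pass near $q'$, and tubes through $q'$ have directions $N_1(\zeta_T)$ ranging over all of $S_1$, not over a single leaf $C_1(h)$. Consequently the energy lemma of Section \ref{SE} cannot be applied as you describe. Relatedly, the asserted bound $\|\Phi^{(q')}\|_{L^\infty(q'')}\lesssim R^{-n/4}M(\phi)^{1/2}$ is unsupported: the subcube $q'$ has side $\approx 2^{-C_0}R$, not $R^{1/2}$, so there is no focusing that would yield an $n$-curvature dispersive decay; and for conic-type surfaces (one vanishing principal curvature, explicitly allowed by \textbf{C2}) such a bound could not hold even from a point source.

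In the paper the leaf $C_1(h)$ is not produced by the tube-to-subcube assignment at all. It appears only after an almost-orthogonality decomposition of $\Phi^{(q_0)}\psi$ by the \emph{output} space-time frequency $(\xi,\tau)\approx(\xi_1+\xi_2,\varphi_1(\xi_1)+\varphi_2(\xi_2))$; for fixed $(\xi,\tau)$ the contributing $\xi_1$'s are forced into (a neighborhood of) $\Pi_1 C_1(h)$ with $h=(\xi,\tau)$, and only then do the relevant tubes form a thin cone $\mathcal{CN}(C_1(h))$. Moreover the paper's weights are $\psi$-dependent, $m_{q_0,T}=\sum_{\xi_2}\|\tilde\chi_T\psi_{\xi_2}\|_{L^2(q_0)}^2$, so that after Cauchy--Schwarz one of the two factors is literally an $L^2$ norm of $\psi$ over the cone neighborhood inside $q_0$, to which the energy lemma applies directly. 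Your purely geometric weights do not manufacture this structure, and an $L^\infty\times L^2$ splitting cannot replace it.
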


\begin{rem} \label{RNLP}
The above result is stated for scalar $\phi,\psi$, but it holds for vector versions as well. Most important 
is that we can construct $\Phi=\Phi_c(\phi, \Psi, Q)$ where $\Psi$ is a vector free wave and all its scalar components 
satisfy similar properties to the $\psi$ above. 
\end{rem}

\begin{proof}  There are several scales involved in this argument. The large scale is the size $R$ of the cube
$Q$. The coarse scale is $2^{-C_0} R \gg R^\frac12$, this being the size of the smaller cubes
in $\calQ_{C_0}(Q)$ and the subject of the claims in the Proposition. Then there is the fine scale $r=2^{-j}R$ chosen such that
 $r \approx R^\frac12$. Notice that $r$ is the proper scale for wave packets corresponding to time scales $R$ and also
 that their scale is $c^{-2} r \ll 2^{-C_0} R$, last one being the scale of cubes in $\calQ_{C_0}(Q)$. 
 
We use Lemma \ref{LeWP} with $J=j$ to construct the wave packet decomposition for $\phi$. For any $q_0 \in \calQ_{C_0}(Q)$ we define
\[
m_{q_0,T}:= \sum_{\xi_2 \in \mathcal{L}} \|  \tilde{\chi}_T \psi_{\xi_2} \|^2_{L^2(q_0)} 
\]
and 
\[
m_T:= \sum_{q_0 \in \calQ_{C_0}(Q)} m_{q_0, T} = \sum_{\xi_2 \in \mathcal{L}} \| \tilde{\chi}_T  \psi_{\xi_2} \|^2_{L^2(Q)}.
\]
Based on this we define
\begin{equation} \label{dPq0}
\Phi^{(q_0)}:= \sum_{T} \frac{m_{q_0,T}}{m_T} \phi_T. 
\end{equation}
One are of concern may be the fact that $m_T=0$ for some tube $T$ and that would create problems in the definition above. 
In this case it follows that $\tilde{\chi}_T  \psi_{\xi_2}=0$ on $Q$, for all $\xi_2$, thus $\tilde{\chi}_T  \psi=0$ on $Q$ which implies that
$\psi=0$ on $Q$. But in this case, any table $\Phi$ satisfies \eqref{Phia2}. Constructing tables satisfying the other properties is a trivial matter,
we can simply replace the degenerate coefficients $\frac{m_{q_0,T}}{m_T}$ in \eqref{dPq0} by $\frac1{2^{(n+1)C_0}}$. 

By combing the definitions above with the decomposition property \eqref{lind}, we obtain 
\[
\phi=\sum_{q_0 \in \calQ_{C_0}(Q)} \Phi^{(q_0)}
\]
thus justifying \eqref{dec}. 

The margin estimate \eqref{MPhi2} follows from the margin estimate on tubes provided by Lemma \ref{LeWP}. 
The coefficients $m_{q_0,T}$ satisfy \eqref{q0e}, thus the estimate \eqref{PhiM2} follows from 
 \eqref{q00e}. 

All that is left to prove is \eqref{Phia2}, which is equivalent to 
\begin{equation} \label{red1}
\sum_{q \in \calQ_j(Q):d(q,q_0) \ges cR} \| \Phi^{(q_0)} \psi \|^2_{L^2(q)} \ls c^{-C} r^{-(n-1)} M(\phi) M(\psi).
\end{equation}
Note that the cubes $q$ are selected at the finer scale dictated the size of cubes in $\calQ_j(Q)$. From the definition of $\Phi^{(q_0)}$ in 
\eqref{dPq0} we can discard the tubes $q$ which do not intersect $4Q$ based on \eqref{ld}, in the sense that their contribution 
to \eqref{red1} will give a better estimate. 

For the tubes intersecting $4Q$, we make another simplification motivated  by \eqref{qest} 
and focus on the tubes which intersect $q$, that is we focus on the following term 
\[
\sum_{q \in \calQ_j(Q):d(q,q_0) \ges cR} \| \sum_{T \cap q \ne \emptyset} \frac{m_{q_0,T}}{m_T} \phi_T \psi \|^2_{L^2(q)}.  
\]
Essentially \eqref{qest}  says that the other tubes have off-diagonal type contribution, that is there are enough gains in the case $T \cap q = \emptyset$ to perform any summation, see the commentaries at end of the proof.

We further expand the above term as follows
\[
= \sum_{q \in \calQ_j(Q):d(q,q_0) \ges cR} \| \sum_{\xi_2 \in \L} \sum_{T_1 \cap q \ne \emptyset} \frac{m_{q_0,T_1}}{m_{T_1}} \phi_{T_1} \psi_{\xi_2} \|^2_{L^2(q)},
\]
where the use of $T_1$ here versus $T$ has no other meaning than streamlining notations. 

We bound the inner summand by
\[
\| \sum_{\xi_2 \in \L} \sum_{T_1 \cap q \ne \emptyset} \frac{m_{q_0,T_1}}{m_{T_1}} \phi_{T_1} \psi_{\xi_2} 
\tilde\chi_q\|^2_{L^2}
\]
where $\tilde \chi_q$ is a smooth approximation of the characteristic function of $q$. More precisely $\tilde\chi_q \equiv 1$
on $q$, $\tilde\chi_q \equiv 0$ on $\R^{n+1} \setminus 2q$ and $|\partial^\alpha_{x,t} \tilde\chi_q | \ls_\alpha r^{-|\alpha|}$ for all 
multi-indexes $\alpha \in \N^{n+1}$. As a consequence, on the Fourier side $\mathcal{F}_{x,t}(\tilde\chi_q)$ is highly concentrated in the region $|(\xi,\tau) | \leq r^{-1}$ and decays fast away from it, that is $|\mathcal{F}(\tilde\chi_q)(\xi,\tau)| \ls_N \la r (\xi,\tau) \ra^{-N}$ for all $N \in \N$. 

Since $\mathcal{F}_{x,t} (\phi_{T_1} \psi_{\xi_2} \tilde\chi_q)= \mathcal{F}_{x,t} (\phi_{T_1} \psi_{\xi_2}) 
* \mathcal{F}_{x,t} (\tilde\chi_q)$, it follows that the multiplication by $\tilde\chi_q$ does not change, morally speaking, the space-time Fourier support of the product $ \phi_{T_1} \psi_{\xi_2}$ by more than $r^{-1}$.

We aim to exploit two types of orthogonality in the interactions $\phi_{T_1} \cdot \psi_{\xi_2}$ between $\phi_{T_1}$ and $\psi_{\xi_2}$: on the spatial frequency side and on the temporal frequency side. The above observation will allow to claim some stability of this orthogonality after multiplication by $\tilde\chi_q$.

The term $ \phi_{T_1} \psi_{\xi_2}$ has spatial frequency $\xi_1+\xi_2$ and time frequency $\varphi_1(\xi_1) + \varphi_2(\xi_2)$, in the sense that its frequency support belongs to the set $\{(\xi,\tau): |(\xi,\tau)-(\xi_1+\xi_2, \varphi_1(\xi_1) + \varphi_2(\xi_2))| \les r^{-1}\}$. 
Therefore, using an almost orthogonality argument based on the decay properties of $\mathcal{F}_{x,t} (\tilde\chi_q)$, the following holds true
\begin{equation} \label{sq}
\| \sum_{\xi_2} \sum_{T_1 \cap q \ne \emptyset} \frac{m_{q_0,T_1}}{m_{T_1}} \phi_{T_1} \psi_{\xi_2} \tilde\chi_q \|_{L^2}^2 \ls \sum_{\xi \in \L} \sum_{\tau \in \L_1}  \| \sum_{(\xi_1, \xi_2) \in A(\xi,\tau)} \sum_{T_1 \cap q \ne \emptyset: \atop \xi_{T_1}=\xi_1} \frac{m_{q_0,T_1}}{m_{T_1}} \phi_{T_1} \psi_{\xi_2} \tilde\chi_q \|_{L^2}^2.
\end{equation}
Here by $(\xi_1, \xi_2) \in A(\xi,\tau)$ we mean that $|\xi_1 + \xi_2 - \xi|, |\varphi_1(\xi_1) + \varphi_2(\xi_2)-\tau| \les r^{-1}$ and 
$(\xi,\tau) \in \L \times \L_1$ where $\L_1=r^{-1}\Z$ (in other words $\L \times \L_1=r^{-1} \Z^{n+1}$). One way to think of the above is that $\xi_2$ is almost uniquely determined by $\xi_1 \in A_1(\xi,\tau)$ via $\xi_2=\xi-\xi_1+ \tilde \xi, \tilde{\xi} \in \L, |\tilde \xi| \les r^{-1}$, where $A_1(\xi,\tau)$ is the set of $\xi_1$ for which there exists a $\xi_2$ such that $(\xi_1,\xi_2) \in A(\xi,\tau)$. 

We now unravel some key observations about the set $A_1(\xi,\tau)$. Note that the set of solutions of the equation $ (\xi_1, \varphi_1(\xi_1)) + (\xi_2,\varphi_2(\xi_2))= \beta$ is the set $S_1 \cap \tau^{\beta}(- S_2)= C_1(\beta)$.
Let $S:=\mathcal{CN}(C_1(\beta)) =\{  \alpha N_1(\zeta): \zeta \in C_1(\beta), \alpha \in \R \}$ where $\beta \in \L \times \L_1$ such that
$|\beta-(\xi_1+\xi_2, \varphi_1(\xi_1) + \varphi_2(\xi_2))| \les r^{-1}$.
We can conclude that the "thickened" surface
\[
\tilde S:=\{ T_1: \xi_1 \in A_1(\xi,\tau), T_1 \cap q \ne \emptyset, T_1 \cap q_0 \ne \emptyset  \}
\]
has the property that $\tilde S \cap q_0$ is a subset of the intersection of $q_0 \cap ((x_q,t_q)+S(c^{-2}r))$ where
we recall that $S(c^{-2}r)$ is the neighborhood of size $c^{-2}r$ to $S$. 

Now, for fixed $(\xi,\tau)$ we write
\[
 \sum_{\xi_1 \in A_1(\xi,\tau)} \sum_{T_1 \cap q \ne \emptyset: \atop \xi_{T_1}=\xi_1} \frac{m_{q_0,T_1}}{m_{T_1}} \phi_{T_1} \psi_{\xi_2} = \sum_{T_1 \in \T(A_1(\xi,\tau))} \frac{m_{q_0,T_1}}{m_{T_1}} \phi_{T_1} \psi_{\xi_2}
\]
where  $\calT(A_1(\xi,\tau)) = \{ T_1 \in \calT: T_1 \cap q \ne \emptyset, \xi_{T_1}=\xi_1, \xi_1 \in A_1(\xi,\tau) \}$ and $\xi_2$ is explicitly determined by $T_1$ through $\xi_1$ as described above. Using the above and the obvious inequality $\frac{m_{q_0,T_1}}{m_{T_1}} \leq \frac{m^\frac12_{q_0,T_1}}{m^\frac12_{T_1}}$, we obtain:
\[
\begin{split}
& \|  \sum_{\xi_1 \in A_1(\xi,\tau)} \sum_{T_1 \cap q \ne \emptyset: \atop \xi_{T_1}=\xi_1}  \frac{m_{q_0,T_1}}{m_{T_1}} \phi_{T_1} \psi_{\xi_2} \tilde\chi_q \|_{L^2}\\
 \ls & 
\left( \sum_{T_1 \in \calT(A_1(\xi,\tau))} \frac{\|\phi_{T_1} \psi_{\xi_2} \tilde\chi_q \|^2_{L^2}}{m_{T_1} \tilde \chi_{T_1}(x_q,t_q)} \right)^\frac12
 \left(  \sum_{T_1 \in \calT(A_1(\xi,\tau))}  m_{q_0,T_1} \tilde \chi_{T_1}(x_q,t_q)  \right)^\frac12.
\end{split}
\]
Next we claim the following estimate
\begin{equation} \label{keyw}
\begin{split}
\sum_{T_1 \in \calT(A_1(\xi,\tau))} m_{q_0,T_1} \tilde \chi_{T_1}(x_q,t_q) & \ls \sum_{\xi_2 \in \L} \| \chi \psi_{\xi_2} \|_{L^2}^2 \\
& \ls  c^{-C} r \sum_{\xi_2 \in \L} M(\psi_{\xi_2}) \ls c^{-C}  r M(\psi). 
\end{split}
\end{equation}
Using the definition of $m_{q_0,T_1}$ we identify  the function 
\[
\chi= ( \sum_{T_1 \in \calT(A_1(\xi,\tau))} \tilde \chi(x_q,t_q) \tilde \chi_{T_1} ) \chi_{q_0}
\]
which makes the first inequality in \eqref{keyw} true. Then we note that $\chi$ has the following decay property:
\[
\chi (x,t) \ls c^{-4} \left( 1+ \frac{d((x,t), S)}{c^{-2}r}\right)^{-N}.
\]
This is a consequence of the fact that the tubes $T_1$ passing thorough $q$ separate inside $q_0$ as a consequence of \eqref{C3} (see \bf C2\rm) and the separation between $q$ and $q_0$, that is $d(q,q_0) \ges cR$. Quantitatively speaking, given a point in $q_0$ close to $S$, there are $\ls c^{-4}$ tubes $T_1$ passing through the point and $q$.

Based on the decay estimate for $\chi$, we can use \eqref{ES} for each $\psi_{\xi_2}$ to justify the second inequality in \eqref{keyw}. The last inequality  \eqref{keyw} is obvious. 

Next we claim the following estimate:
\begin{equation} \label{estau1}
\sum_q \sum_{\xi \in \L} \sum_{\tau \in \L_1} \sum_{T_1 \in  \calT(A_1(\xi,\tau))} \frac{\|\phi_{T_1} \psi_{\xi_2} \tilde \chi_q \|^2_{L^2}}{m_{T_1} \tilde \chi_{T_1}(x_q,t_q)} \ls r^{-n} M(\phi). 
\end{equation}
Notice that this estimate brings back the summation with respect to $(\xi,\tau)$ from \eqref{sq} together with the original summation with respect to $q$. Combing \eqref{estau1} with \eqref{keyw} gives \eqref{red1} and this concludes the proofs of all claims of the Proposition. 

For the reminder of this proof, we establish \eqref{estau1}. Taking into account the frequency localization of $\phi_{T_1} \psi_{\xi_2}$
and the fast decay properties of $\mathcal{F}_{x,t} (\tilde\chi_q)$, we obtain
\[
\|\phi_{T_1} \psi_{\xi_2} \tilde \chi_q \|^2_{L^2} \ls r^{-(n+1)} \|\phi_{T_1} \psi_{\xi_2} \tilde \chi_q \|^2_{L^1}
\]
Therefore it suffices to show that
\[
\sum_q \sum_\xi \sum_\tau \sum_{T_1 \in  \calT(A_1(\xi,\tau))} \frac{\|\phi_{T_1} \tilde \chi_q \|^2_{L^2}
\| \psi_{\xi_2} \tilde \chi_q \|^2_{L^2}}{m_{T_1} \tilde \chi_{T_1}(x_q,t_q)} \ls r M(\phi).
\]
The summation with respect to $(\xi,\tau)$ brings back all possible frequency interactions, 
hence the above is equivalent to proving
\[
\sum_q  \sum_{T_1 \cap q \ne \emptyset} \sum_{\xi_2} \frac{\|\phi_{T_1} \tilde \chi_q \|^2_{L^2}
\| \psi_{\xi_2} \tilde \chi_q \|^2_{L^2}}{m_{T_1} \tilde \chi_{T_1}(x_q,t_q)} \ls r M(\phi).
\]
Note that in the above estimate the frequency of $T_1$, $\xi_{T_1}=\xi_1$ is decoupled from $\xi_2$, and that the summation
over $T_1$ is essentially a summation over $\xi_1$.

By rearranging the sum, it suffices to show
\[
\sum_{T_1} \sum_{q \cap T_1 \ne \emptyset}  \sum_{\xi_2}  \frac{\|\phi_{T_1} \tilde \chi_q \|^2_{L^2} 
\| \psi_{\xi_2} \tilde \chi_q \|^2_{L^2}}{m_{T_1} \tilde \chi_{T_1}(x_q,t_q)} \ls r M(\phi).
\]
The inner sum is estimated as follows
\[
\sum_{q \cap T_1 \ne \emptyset}  \sum_{\xi_2}  \frac{ \| \psi_{\xi_2} \tilde \chi_q \|^2_{L^2}}{m_{T_1} \tilde \chi_{T_1}(x_q,t_q)}
\ls \sum_{\xi_2}  \frac{ \| \psi_{\xi_2} \tilde \chi_{T_1} \|^2_{L^2}}{m_{T_1}} \ls 1.
\]
What is left is to show is that
\[
\sum_{T_1} \sup_{q} \|\phi_{T_1} \tilde \chi_q \|^2_{L^2}  \ls r \sum_{T_1} M(\phi_{T_1}) \ls r M(\phi),
\]
which is obvious given the size of $q$ in the temporal direction. 

One may notice the similarity of the last inequality and the stronger \eqref{qest}. We recall that we provided a simplified version
of the proof where, at some point in the proof, we assumed that $T_1 \cap q \ne \emptyset$. When considering the general case,
one needs to use the stronger \eqref{qest} which brings additional and enough (by taking $N$ large) decay when $T_1 \cap q = \emptyset$.  The details are left to the reader. 

\end{proof}

\begin{proof}[Proof of Proposition \ref{keyP}] Let $\phi=e^{it\varphi_1(D)} \phi_0,\psi=e^{it\varphi_2(D)} \psi_0$ be
free waves satisfying the margin requirements \eqref{mrpg}. Let $Q_R$ be an arbitrary cube of radius $R$. From Lemma \ref{AVL}
it follows that there is a cube $Q \subset 4Q_R$ of size $2R$ such that
\begin{equation} \label{aux21}
\| \phi \cdot \psi \|_{L^p(Q_R)} \leq (1+cC) \| \phi \cdot \psi \|_{L^p(I^{c,j}(Q))}.
\end{equation}
Using the result of Proposition \ref{NLP} we build the table $\Phi=\Phi_c(\phi,\psi,Q)$ on $\phi$ with depth $C_0$ and estimate as follows
\begin{equation*}
\begin{split}
\| \phi \cdot \psi \|_{L^p(I^{c,C_0}(Q))} & \leq \sum_{q_0,q_0' \in \calQ_{C_0}(Q)} \| \Phi^{(q_0)} \psi \|_{L^p((1-c)q_0')} \\ 
&  \leq \sum_{q_0 \in \calQ_{C_0}(Q)} \left( \| \Phi^{(q_0)} \psi \|_{L^p((1-c)q_0)} + \sum_{q_0' \in \calQ_{C_0}(Q) \setminus \{q_0\}} \| \Phi^{(q_0)} \psi \|_{L^p((1-c)q_0')} \right). 
\end{split}
\end{equation*}
Then we construct a table on $\psi$, $\Psi=\Phi_c(\Phi,\psi,Q)$  with depth $C_0$ and estimate in a similar manner (see Remark \ref{RNLP} after Proposition \ref{NLP}) to obtain
\begin{equation*}
\begin{split}
 \| \Phi^{(q_0)} \psi \|_{L^p((1-c)q_0)}  & \leq \sum_{q_0' \in \calQ_{C_0}(Q)} \| \Phi^{(q_0)} \Psi^{(q_0')} \|_{L^p((1-c)q_0)} \\ 
&  \leq \| \Phi^{(q_0)} \Psi^{(q_0)} \|_{L^p((1-c)q_0)} +  \sum_{q_0' \in \calQ_{C_0}(Q) \setminus \{ q_0 \}} \| \Phi^{(q_0)} \Psi^{(q_0')} \|_{L^p((1-c)q_0)}. 
\end{split}
\end{equation*}
Based on the property \eqref{Phia2} of tables we conclude that, for each $q_0, q_0'$ with $q_0 \ne q_0'$ the following hold true
\[
\| \Phi^{(q_0)} \psi \|_{L^2((1-c)q_0')} + \| \Phi^{(q_0)} \Psi^{(q_0')} \|_{L^2((1-c)q_0)} \leq C c^{-C} R^{-\frac{n-1}4} M(\phi)^\frac12 M(\psi)^\frac12. 
\]
Given the size of the cubes, we easily obtain the $L^1$ estimate
\[
\| \Phi^{(q_0)} \psi \|_{L^1((1-c)q_0')} + \| \Phi^{(q_0)} \Psi^{(q_0')} \|_{L^1((1-c)q_0)} \leq C R M(\phi)^\frac12 M(\psi)^\frac12. 
\]
By interpolation, we obtain the $L^p$ bounds
\[
\| \Phi^{(q_0)} \psi \|_{L^p((1-c)q_0')} + \| \Phi^{(q_0)} \Psi^{(q_0')} \|_{L^p((1-c)q_0)} \leq C c^{-C} R^{\frac{n+3}2(\frac1p-\frac{n+1}{n+3})}  M(\phi)^\frac12 M(\psi)^\frac12, 
\]
which holds true for any $q_0 \ne q_0'$. We plug this in the above estimates to conclude with
\[
\| \phi \cdot \psi \|_{L^p(I^{c,C_0}(Q))}  \leq \sum_{q_0 \in \calQ_{C_0}(Q)} \| \Phi^{(q_0)} \Psi^{(q_0)} \|_{L^p((1-c)q_0)} +
C c^{-C} R^{\frac{n+3}2(\frac1p-\frac{n+1}{n+3})}  M(\phi)^\frac12 M(\psi)^\frac12.
\]
Next we recall that $q_0$ has size $\frac{4R}{2^{C_0}} \leq \frac{R}2$ (which in fact may be seen as setting the threshold needed for $C_0$).
Using this we conclude that
\[
\begin{split}
\| \phi \cdot \psi \|_{L^p(I^{c,C_0}(Q))} &  \leq  \sum_{q_0 \in \calQ_{C_0}(Q)} \bar A_p(\frac{R}2) M(\Phi^{(q_0)})^\frac12 M( \Psi^{(q_0)} )^\frac12 \\
& +  C c^{-C} R^{\frac{n+3}2(\frac1p-\frac{n+1}{n+3})}  M(\phi)^\frac12 M(\psi)^\frac12 \\
& \leq \bar A_p(\frac{R}2) \left( \sum_{q_0 \in \calQ_{C_0}(Q)} M(\Phi^{(q_0)}) \right)^\frac12 
\left( \sum_{q_0 \in \calQ_{C_0}(Q)} M(\Psi^{(q_0)}) \right)^\frac12 \\
& + C c^{-C} R^{\frac{n+3}2(\frac1p-\frac{n+1}{n+3})}  M(\phi)^\frac12 M(\psi)^\frac12 \\
& \leq \bar A_p(\frac{R}2) M(\Phi)^\frac12 M(\Psi)^\frac12 
+ C c^{-C} R^{\frac{n+3}2(\frac1p-\frac{n+1}{n+3})}  M(\phi)^\frac12 M(\psi)^\frac12 \\
& \leq \left( (1+cC) \bar  A_p(\frac{R}2) + C c^{-C} R^{\frac{n+3}2(\frac1p-\frac{n+1}{n+3})} \right) M(\phi)^\frac12 M(\psi)^\frac12. 
\end{split}
\] 
where we have used \eqref{PhiM2} in the last line. In using the induction-type bound on $\Phi^{(q_0)} \Psi^{(q_0)}$ we are
using the margin bounds on $\Phi, \Psi$ from \eqref{MPhi2} to conclude with \eqref{mrpg}; this is easily seen to be the case
provided $R$ is large enough to satisfy $CR^{-\frac12} \leq R^{-\frac14}$. 

Recalling \eqref{aux21}, we obtain that for any cube $Q_R$ of size $R$ the following holds true
\[
\| \phi \psi \|_{L^p(Q_R)} \leq (1+cC) \left( (1+cC) \bar A_p(\frac{R}2) + C c^{-C} R^{\frac{n+3}2(\frac1p-\frac{n+1}{n+3})} \right) M(\phi)^\frac12 M(\psi)^\frac12. 
\]
As a consequence we obtain 
\[
A_p(R) \leq (1+cC) \bar A_p(\frac{R}2) + C c^{-C} R^{\frac{n+3}2(\frac1p-\frac{n+1}{n+3})}.
\]
after redefining $C$, and this is precisely the statement in \eqref{ApR}. 
\end{proof}

\subsection*{Acknowledgement}
Part of this work was supported by a grant from the Simons Foundation ($\# 359929$, Ioan Bejenaru). The author thanks 
Ciprian Demeter, Betsy Stovall and Sanghyuk Lee for helpful suggestions about the literature in the field. Ciprian Demeter pointed out 
that we erroneously stated the main result in \cite{BeCaTa} in the original draft of this paper.

\bibliographystyle{amsplain} \bibliography{HA-refs}

\providecommand{\bysame}{\leavevmode\hbox to3em{\hrulefill}\thinspace}
\providecommand{\MR}{\relax\ifhmode\unskip\space\fi MR }
\providecommand{\MRhref}[2]{%
  \href{http://www.ams.org/mathscinet-getitem?mr=#1}{#2}
}
\providecommand{\href}[2]{#2}
\begin{thebibliography}{10}

\bibitem{bikt}
Ioan Bejenaru, Alexandru~D. Ionescu, Carlos~E. Kenig, and Daniel Tataru,
  \emph{Global {S}chr\"odinger maps in dimensions {$d\geq 2$}: small data in
  the critical {S}obolev spaces}, Ann. of Math. (2) \textbf{173} (2011), no.~3,
  1443--1506. \MR{2800718 (2012g:58048)}

\bibitem{BeCaTa}
Jonathan Bennett, Anthony Carbery, and Terence Tao, \emph{On the multilinear
  restriction and {K}akeya conjectures}, Acta Math. \textbf{196} (2006), no.~2,
  261--302. \MR{2275834 (2007h:42019)}

\bibitem{Bou-CM}
J.~Bourgain, \emph{Estimates for cone multipliers}, Geometric aspects of
  functional analysis ({I}srael, 1992--1994), Oper. Theory Adv. Appl., vol.~77,
  Birkh\"auser, Basel, 1995, pp.~41--60. \MR{1353448 (96m:42022)}

\bibitem{Bou-SMF}
\bysame, \emph{On the {S}chr\"odinger maximal function in higher dimension},
  Tr. Mat. Inst. Steklova \textbf{280} (2013), no.~Ortogonalnye Ryady, Teoriya
  Priblizhenii i Smezhnye Voprosy, 53--66. \MR{3241836}

\bibitem{BoDeGu}
Jean Bourgain and Ciprian Demeter, \emph{The proof of the $l^2$ decoupling
  conjecture}, arXiv:1403.5335.

\bibitem{BoDe}
\bysame, \emph{The proof of the {$l^2$} decoupling conjecture}, Ann. of Math.
  (2) \textbf{182} (2015), no.~1, 351--389. \MR{3374964}

\bibitem{BoGu}
Jean Bourgain and Larry Guth, \emph{Bounds on oscillatory integral operators
  based on multilinear estimates}, Geom. Funct. Anal. \textbf{21} (2011),
  no.~6, 1239--1295. \MR{2860188 (2012k:42018)}

\bibitem{BuMuVa}
Stefan Buschenhenke, Detlef M\"uller, and Ana Vargas, \emph{A fourier
  restriction theorem for a two-dimensional surface of finite type},
  arXiv:1508.00791.

\bibitem{doCa}
Manfredo~Perdig{\~a}o do~Carmo, \emph{Riemannian geometry}, Mathematics: Theory
  \& Applications, Birkh\"auser Boston, Inc., Boston, MA, 1992, Translated from
  the second Portuguese edition by Francis Flaherty. \MR{1138207 (92i:53001)}

\bibitem{DL}
Xiumin Du and Xiaochun Li, \emph{Pointwise convergence of the solutions to
  scr\"odinger equation in {$\R^2$}}, arXiv:1508.05437.

\bibitem{FoKl}
Damiano Foschi and Sergiu Klainerman, \emph{Bilinear space-time estimates for
  homogeneous wave equations}, Ann. Sci. \'Ecole Norm. Sup. (4) \textbf{33}
  (2000), no.~2, 211--274. \MR{1755116 (2001g:35145)}

\bibitem{Gu-res}
Larry Guth, \emph{A restriction estimate using polynomial partitioning},
  arXiv:1407.1916.

\bibitem{Gu-main}
\bysame, \emph{The endpoint case of the {B}ennett-{C}arbery-{T}ao multilinear
  {K}akeya conjecture}, Acta Math. \textbf{205} (2010), no.~2, 263--286.
  \MR{2746348 (2012c:42027)}

\bibitem{KeMe}
Carlos~E. Kenig and Frank Merle, \emph{Global well-posedness, scattering and
  blow-up for the energy-critical, focusing, non-linear {S}chr\"odinger
  equation in the radial case}, Invent. Math. \textbf{166} (2006), no.~3,
  645--675. \MR{2257393 (2007g:35232)}

\bibitem{Lee-SMF}
Sanghyuk Lee, \emph{Endpoint estimates for the circular maximal function},
  Proc. Amer. Math. Soc. \textbf{131} (2003), no.~5, 1433--1442 (electronic).
  \MR{1949873 (2003k:42035)}

\bibitem{Lee-BR}
\bysame, \emph{Bilinear restriction estimates for surfaces with curvatures of
  different signs}, Trans. Amer. Math. Soc. \textbf{358} (2006), no.~8,
  3511--3533 (electronic). \MR{2218987 (2007a:42023)}

\bibitem{LeeVa}
Sanghyuk Lee and Ana Vargas, \emph{Restriction estimates for some surfaces with
  vanishing curvatures}, J. Funct. Anal. \textbf{258} (2010), no.~9,
  2884--2909. \MR{2595728 (2011j:42029)}

\bibitem{MeVe}
F.~Merle and L.~Vega, \emph{Compactness at blow-up time for {$L^2$} solutions
  of the critical nonlinear {S}chr\"odinger equation in 2{D}}, Internat. Math.
  Res. Notices (1998), no.~8, 399--425. \MR{1628235 (99d:35156)}

\bibitem{St}
Elias~M. Stein, \emph{Harmonic analysis: real-variable methods, orthogonality,
  and oscillatory integrals}, Princeton Mathematical Series, vol.~43, Princeton
  University Press, Princeton, NJ, 1993, With the assistance of Timothy S.
  Murphy, Monographs in Harmonic Analysis, III. \MR{1232192 (95c:42002)}

\bibitem{Sto}
Betsy Stovall, \emph{Linear and bilinear restriction to certain rotationally
  symmetric hypersurfaces}, preprint.

\bibitem{TV-CM1}
T.~Tao and A.~Vargas, \emph{A bilinear approach to cone multipliers. {I}.
  {R}estriction estimates}, Geom. Funct. Anal. \textbf{10} (2000), no.~1,
  185--215. \MR{1748920 (2002e:42012)}

\bibitem{TV-CM2}
\bysame, \emph{A bilinear approach to cone multipliers. {II}. {A}pplications},
  Geom. Funct. Anal. \textbf{10} (2000), no.~1, 216--258. \MR{1748921
  (2002e:42013)}

\bibitem{Tao-BW}
Terence Tao, \emph{Endpoint bilinear restriction theorems for the cone, and
  some sharp null form estimates}, Math. Z. \textbf{238} (2001), no.~2,
  215--268. \MR{1865417 (2003a:42010)}

\bibitem{Tao-BP}
\bysame, \emph{A sharp bilinear restrictions estimate for paraboloids}, Geom.
  Funct. Anal. \textbf{13} (2003), no.~6, 1359--1384. \MR{2033842
  (2004m:47111)}

\bibitem{Tao-book}
\bysame, \emph{Nonlinear dispersive equations}, CBMS Regional Conference Series
  in Mathematics, vol. 106, Published for the Conference Board of the
  Mathematical Sciences, Washington, DC; by the American Mathematical Society,
  Providence, RI, 2006, Local and global analysis. \MR{2233925 (2008i:35211)}

\bibitem{tat}
Daniel Tataru, \emph{On global existence and scattering for the wave maps
  equation}, Amer. J. Math. \textbf{123} (2001), no.~1, 37--77. \MR{1827277
  (2002c:58045)}

\bibitem{Va}
Ana Vargas, \emph{Restriction theorems for a surface with negative curvature},
  Math. Z. \textbf{249} (2005), no.~1, 97--111. \MR{2106972 (2005f:42029)}

\bibitem{Wo}
Thomas Wolff, \emph{A sharp bilinear cone restriction estimate}, Ann. of Math.
  (2) \textbf{153} (2001), no.~3, 661--698. \MR{1836285 (2002j:42019)}

\end{thebibliography}

\end{document}